\newcommand{\gen}{\mathrm{gen}}
\def\zz{{\mathbb Z}}
\def\cc{{\mathbb C}}
\def\nn{{\mathbb N}}
\def\dd{{\mathrm d}}
\def\fp{\mathrm{fp}}
\def\i{{\rm i}}
\def\e{{\rm e}}
\def\d{{\rm d}}
\DeclareMathOperator{\diag}{diag}     
\def\cF{{\cal F}}
\def\cG{{\cal G}}
\newcommand{\lrd}{\overset{\leftrightarrow}{\partial}}
\newcommand{\rv}{\mathop{\mathrm{rv}_0}}
\title{ Confluent functions, Laguerre polynomials 
\\ and their (generalized) bilinear integrals }
\author{ Jan Derezi\'nski\thanks{ \hspace{1ex}Supported by the National Science Center of Poland under the 
    grant UMO-2019/35/B/ST1/01651. } \\ Department of Mathematical Methods in Physics, Faculty of Physics, \\
University of Warsaw, Pasteura 5, 02-093 Warszawa, Poland \\ e-mail:  jan.derezinski@fuw.edu.pl \vspace{2ex}
\\
         Christian Ga\ss \thanks{ \hspace{1ex}Supported by the National Science Center of Poland under the 
    grant UMO-2019/35/B/ST1/01651. }
                      \\ Department of Mathematical Methods in Physics, Faculty of Physics, \\
University of Warsaw, Pasteura 5, 02-093 Warszawa, Poland; and \\
Faculty of Physics, University of Vienna \\
Boltzmanngasse 5, A-1090 Vienna, Austria 
\\ e-mail:  christian.gass@fuw.edu.pl \vspace{2ex}
\\
         Joonas Mikael V\"att\"o\thanks{ \hspace{1ex}Supported by the Academy of Finland grant number 340461 "Conformal invariance in planar random geometry" and the Academy of Finland Centre of Excellence Programme grant number 346315 "Finnish centre of excellence in Randomness and STructures (FiRST)". }
                      \\ Department of Mathematics and Systems Analysis,
\protect\\
Aalto University, Otakaari 1, 02150 Espoo, Finlands \\ e-mail:  joonas.vatto@aalto.fi                     }
\begin{document}
\maketitle
\begin{abstract}
     We review properties of confluent functions and the closely related 
Laguerre polynomials, and determine their bilinear integrals.
As is well-known, these integrals are convergent only for a limited 
range of parameters. However, when one uses {\em the generalized integral} 
they can be computed essentially without restricting the parameters. 
This gives the (generalized) Gram matrix  of Laguerre polynomials.
If the parameters are not negative integers, then Laguerre polynomials
are orthogonal, or at least pseudo-orthogonal in the case of 
generalized integrals. For negative integer
parameters, the orthogonality relations are more complicated.
\end{abstract}

\noindent
{\bf Keywords:} special functions, confluent equation, orthogonal polynomials, regularization.

\section{Introduction}

Consider the (formal) differential operator
\begin{align}
  \cF_{\alpha}&\coloneqq -z\partial_z^2-(1+\alpha-z)\partial_z.
  \label{eq:confl_eq_expform.}
\end{align}
{\em Kummer's confluent function}
$F_{\theta,\alpha}(z)\coloneqq {}_1F_1(a;c;z)$ is an eigenfunction 
of $\cF_\alpha$                               with
                                  eigenvalue
                                  $\frac{1+\theta+\alpha}{2}$, where
                                  following  \cite{De1,De2,De3}
                                  instead of $a,c$ we prefer to use
                                  the parameters
                                  \begin{align} \alpha\coloneqq c-1, \qquad \theta: =-c+2a.\label{popi}\end{align}
   
Besides Kummer's function, \eqref{eq:confl_eq_expform.} possesses
a second type of distinguished eigenfunction with eigenvalues
   $\frac{1+\theta+\alpha}{2}$, 
 called {\em Tricomi's confluent functions},  which we denote
$U(a;c;z)$ or, in the parameters
\eqref{popi}, $U_{\theta,\alpha}(z)$. Tricomi's functions usually have 
a better behavior near infinity than Kummer's functions.

The main topic of this paper is the computation of bilinear 
integrals of Tricomi functions, 
more precisely, integrals   of the form
\begin{align}   \quad   \int_0^\infty U_{\theta_1,\alpha}(z) 
U_{\theta_2,\alpha}(z)\e^{-z}z^\alpha\d z.
\label{popi1}\end{align}
It is known that  for $|\Re(\alpha)|<1$ \eqref{popi1} can be 
computed explicitly, see e.g. \cite{DFNR}. 
   For $|\Re(\alpha)|\geq1$, 
\eqref{popi1} is divergent --- except for very specific combinations 
of the parameters --- because of the behavior of the
integrand near $0$.    
However, when one applies the so-called {\em generalized integral}, one can
compute \eqref{popi1} for all values of $\alpha$.
For $\alpha\in\zz$, $\alpha\neq0$, one needs to apply the anomalous
generalized integral, and one obtains a much more complicated expression.

For special values of the parameters, Tricomi's functions coincide up to
a coefficient with the well-known Laguerre polynomials.
Their classical definition uses the so-called {\em Rodrigues formula }
\begin{align}
L_n^\alpha(z) \coloneqq \frac{1}{n!}\e^zz^{-\alpha}\partial_z^n
\e^{-z}z^{n+\alpha}, \quad n\in\nn_0,
\end{align}
and applies to all $\alpha\in\cc$. They are, however, orthogonal with respect 
to the positive definite measure $\dd \mu(z) = z^\alpha \e^{-z} \dd z$ 
on $L^2[0,\infty[$ only if $\alpha>-1$.

The orthogonality relations for Laguerre polynomials, valid for $\Re(\alpha)>-1$,
  \begin{align}
\label{eq:scalar_product_laguerre.}
 \int_0^\infty L_m^\alpha(z) L_n^\alpha(z) z^\alpha \e^{-z} \dd z 
 = \frac{\Gamma(1+n+\alpha)}{n!}\delta_{m,n},
\end{align}
belong to the  standard knowledge. Again, using the generalized 
integral, we extend the relations \eqref{eq:scalar_product_laguerre.} 
to all $\alpha\in\cc$. For $-\alpha\not\in\nn$ we still have the same
expression as in \eqref{eq:scalar_product_laguerre.}.
In particular, $L_n^\alpha$ are (pseudo-)orthogonal to one another for 
$m\neq n$. The situation is more complicated for $-\alpha\in\nn$, 
where the generalized integral becomes anomalous.

One can derive the bilinear integrals of Laguerre polynomials from
those of Tricomi's functions. However, we prefer a direct method,
which mimics the usual proof of the orthogonality relations based on
the Rodrigues formula and repeated integration by parts, known from 
standard textbooks. Interestingly, integration by parts
works quite well  also for generalized integrals. 
 This is not obvious, so we dedicate a subsection in 
Appendix \ref{app:gen_int} to the description of integration by 
parts in the context of generalized integrals.

For  $\alpha\not\in[-1,\infty[$ (typically real, preferably
integer),  Laguerre (or other orthogonal) 
polynomials  are of interest in several fields of 
mathematics, see for example \cite{Schur,Sell04,ShoreySinha,KuijlaarsMcLaughlin,Behr}.
Apparently, the value $\alpha=-1$ is especially interesting in
applications \cite{Behr-oral}.

In a recent work \cite{DGR23a}, some of us (with collaborators) used
the generalized integral, a concept that goes back to independent 
considerations of Riesz \cite{Riesz} and Hadamard 
\cite{Hadamard23,Hadamard32}, to give a meaning to integrals over 
classically non-integrable functions.
The generalized integrals of these functions 
appear as higher-dimensional generalizations of Green's functions for Laplacians
perturbed by point interactions \cite{DGR24a}, forming toy models for
various facets of renormalization in quantum field theory.
The generalized integral is 
a linear continuation of the standard integral. It is closely related 
to the extension of homogeneous distributions \cite{Hoermander90,Paycha} 
and the so-called barred integral \cite{Lesch97}. We find it
interesting that replacing the integral 
by the generalized integral, we are able to compute bilinear integrals
of the form \eqref{popi1} and \eqref{eq:scalar_product_laguerre.} for arbitrary complex $\alpha$. 

Our paper is not the first to address generalized orthogonality 
relations of the classical orthogonal polynomials. There is some 
literature on the subject 
\cite{KrallFrink,kwon1,kwon2,kwon3,MortonKrall,Krall_the_other,BL12},  
where generalized orthogonality relations for some $\alpha\in\cc$ with 
$\Re(\alpha)<-1$ are often described by changing the integral measure, 
typically allowing for distributional measures. The generalized integral 
is a natural tool that allows us to treat all $\alpha$  simultaneously. 
We have not seen such a comprehensive description in the literature.

Integrals of expressions containing confluent functions can be found in many
sources. However, most of them involve a single confluent function, such as 
those in \cite{Dattoli1}, where they are derived with the so-called umbral 
technique. Integrals of products of two confluent functions are computed 
e.g. in \cite{DFNR}; they are less frequent in the literature. The identity 
\eqref{telescope1}, whose proof involves a telescoping series, belongs to a 
class of identities  treated by umbral methods in \cite{Dattoli2}.
  
Here is the plan of our paper. Section \ref{sec:bessel} is devoted to
the confluent equation for general parameters. First we recall its
basic theory, following mostly \cite{De1,De2}. In
  particular, we review the definitions of four standard solutions of
  the confluent equation. Three of them typically blow up at infinity,
  and only one of them, the Tricomi function, can be used in
   bilinear (generalized) integrals of the form \eqref{popi1}.
  The main  new result of
this section are the expressions for these integrals for all parameters.

Section \ref{Laguerre polynomials} is devoted to Laguerre
polynomials. Again, we start with a concise exposition of their
theory. Then we compute their bilinear integrals.   We
  use two methods. Our first method is
based on integration by parts, which interestingly works well also in
the case of generalized integrals. Then we give an alternative
derivation: we show how bilinear integrals of Laguerre polynomials
can be obtained as special cases of bilinear integrals of Tricomi
functions obtained in Section \ref{sec:bessel}.

In Appendix \ref{app:gen_int} we give a resum\'e of  properties of
generalized integrals based mostly on \cite{DGR23a}. The appendix
contains  also a new fact not discussed in \cite{DGR23a}: the 
integration by parts property of generalized integrals. Appendix
\ref{app:lemmas} contains proofs of two identities for the
digamma functions and the Pochhammer symbols needed in our analysis.

\section{Confluent equation}
\label{sec:bessel}

In this section we start with an overview of the
  confluent hypergeometric equation and  its standard  solutions.
All the identities of this section until  Subsection \ref{Relationship to the modified Bessel equation}
can be found in one form or another in standard sources,
such as \cite{NIST}. Nevertheless, we believe that our presentation,
influenced by \cite{De1,De2,De3}, is quite different from the usual
one. We stress that the confluent equation has two different but
equivalent forms:
it can be understood as an eigenvalue equation of the ${}_1F_1$ operator
\eqref{eq:confl_eigeneq} and of
the ${}_2F_0$ operator \eqref{eq:confl_eigeneq2}. These two forms are
related to the so-called Lie-algebraic parameters \eqref{lie-alg},
which are helpful in organizing the properties of the confluent equation.

  For convenience, and as we employ them heavily, we recall the definition of the Pochhammer symbol and the harmonic numbers
\begin{equation}
(a)_n = a(a+1)...(a+n-1), \quad H_k(a) = \sum_{j=0}^{k-1}\frac{1}{a+j},
\end{equation}
which admit a useful extension to complex parameters in terms of the Gamma and Digamma functions,
\begin{equation}
	(a)_z = \frac{\Gamma(a+z)}{\Gamma(a)}, \quad H_z(a)=\psi(a+z)-\psi(a), \quad z \in \cc \setminus (-a-\nn_0).
\end{equation}

\subsection{Hypergeometric  ${}_1F_1$ function}
 {\em (Kummer's) confluent function}, also called the
{\em hypergeometric  ${}_1F_1$ function}, is defined by 
the following series convergent in $\cc$:
\begin{align}
{}_1F_1(a;c;z)&\coloneqq \sum_{n=0}^\infty
\frac{(a)_n}{(c)_n}\frac{z^n}{n!}.
\end{align}
It is a confluent form of the ${}_2F_1$ function
\begin{align}
{}_1F_1(a;c;z)
=\lim_{b\to\infty} {}_2F_1\big(a,b;c;\tfrac{z}{b}\big),
\end{align}
which satisfies the {\em confluent quation}
\begin{align} 
(z\partial_z^2+(c-z)\partial_z-a){}_1F_1(a;c;z)=0
\end{align}
and the {\em 1st Kummer identity}
\begin{align} 
{}_1F_1(a;c;z)=\e^z {}_1F_1(c-a;c;-z).
\end{align}
It is often preferable to replace it by the so-called 
{\em Olver's normalized ${}_1F_1$ function}
\begin{align}{}_1{\bf F}_1  (a;c;z)
&\coloneqq \frac{{}_1F_1(a;c;z)}{\Gamma(c)}
=\sum_{n=0}^\infty 
\frac{(a)_n}{\Gamma(c+n)}\frac{z^n}{n!}.
\end{align}

Here are its integral representations:
 for all parameters
\begin{eqnarray} \label{eq:intrep1}
\frac{1}{2\pi \i}\int\limits
_{]-\infty,(0,z)^+,-\infty[} t^{a-c}\e^t(t-z)^{-a}\d t
&=& {}_1{\bf F}_1  (a;c;z),\end{eqnarray}
for $\Re(a)>0,\ \Re (c-a)>0$,
\begin{eqnarray}\frac{1}{\Gamma(a)\Gamma(c-a)}\int\limits_{[1,+\infty[}\e^{\frac{z}{t}}t^{-c}(t-1)^{c-a-1}\d t
&=& {}_1{\bf F}_1   (a;c;z),\end{eqnarray}
and for $\Re (c-a)>0$ and $a\notin\nn$, 
\begin{eqnarray}
\frac{\Gamma(1-a)}{2\pi\i\Gamma(c-a)}\int\limits_{[1,0^+,1]}\e^{\frac{z}{t}}(-t)^{-c}(-t+1)^{c-a-1}\d t
&=& {}_1{\bf F}_1   (a;c;z).
\label{kumme}\end{eqnarray}
The meaning of the contours in \eqref{eq:intrep1} and \eqref{kumme} is 
the following. In \eqref{eq:intrep1}, we start at $-\infty$, go around 
both $0$ and $z$ in the positive sense and go back to $-\infty$. 
Similarly, in \eqref{kumme}, we start from $1$, go around $0$ in the 
positive sense and go back to $1$. 

For integer  $m$ we have  expressions in terms of elementary
functions:
\begin{align}
{}_1F_1(c+m;c;z)&=\frac{z^{1-c}}{(c)_m}\partial_z^m z^{c-1+m}\e^z,\\
{}_1{\bf  F}_1(c+m;c;z)&=\frac{z^{1-c}}{\Gamma(c+m)}\partial_z^m z^{c-1+m}\e^z.\end{align}
Here, to define $\partial_z^m$ for negative $m$ we use 
\begin{align}
 \partial_z^{-1}f(z)\coloneqq \gen\int_0^z f(y)\d y,\label{anoma}
\end{align}
provided that \eqref{anoma} is non-anomalous and that the function is integrable in the generalized 
sense often enough.

\subsection{Hypergeometric ${}_2F_0$ function}

The following function arises  as a  result of a  different
confluence of singularities of the hypergeometric function. It is
defined for $w\in\cc\backslash[0,+\infty[$ by 
\[{}_2F_0(a,b;-;w)\coloneqq \lim_{c\to\infty}{}_2F_1(a,b;c;cw),\]
where $|\arg c-\pi|<\pi-\epsilon$, $\epsilon>0$.
It extends to an analytic function on the universal cover of
$\cc\backslash\{0\}$ 
with a branch point of an infinite order at 0.
It has the following asymptotic expansion:
\[
{}_2F_0(a,b;-;w)\sim\sum_{n=0}^\infty\frac{(a)_n(b)_n}{n!}w^n,
\ |\arg w-\pi|<\pi-\epsilon.
\]
It satisfies the equation
\begin{align}
\big(w^2\partial_w^2 +( -1+(1+a+b)w)\partial_w +ab\big)
{}_2F_0(a,b;-;w)=0,\end{align}
which we will call the {\em${}_2F_0$ equation}.

We have an integral representation for $\Re(a)>0$,
\[\frac{1}{\Gamma(a)}\int_0^\infty
\e^{-\frac{1}{t}}t^{b-a-1}(t-w)^{-b}\d t
= { F}   (a,b;-;w), \ \ w\not\in[0,\infty[,
\]and for $a\notin \nn$,
\[\frac{\Gamma(1-a)}{2\pi\i}\int\limits_{[0,w^+,0]}
\e^{-\frac{1}{t}}t^{b-a-1}(t-w)^{-b}\d t
= { F}   (a,b;-;w), \ \ w\not\in[0,\infty[.
\]

\subsection{Equivalence between the ${}_1F_1$ and ${}_2F_0$ equations}

It is easy to derive the following identity:
\begin{align}
\notag
&(-w)^{a+1}\Big(w^2\partial_w^2 +( -1+(1+a+b)w)\partial_w +ab\Big)
(-w)^{-a} =\ z\partial_z^2+(1+a-b-z)\partial_z-a,
   \label{g8a}
   \end{align}
 where $z=-w^{-1}$, $w=-z^{-1}$. 
Hence the ${}_2F_0$ equation is equivalent to the  ${}_1F_1$ 
equation with the following relation  between the parameters 
\[c=1+a-b,\ \ \ \ b=1+a-c.\]

Because of this equivalence, 
 {\em Tricomi's confluent function} 
\[U(a,c,z)\coloneqq z^{-a}{}_2F_0(a,1+a-c;-;-z^{-1})\]
is one of  solutions of the ${}_1F_1$ equation, 

\subsection{Confluent equation in Lie-algebraic parameters}
Instead of the classical parameters we usually prefer  
the Lie-algebraic parameters $\theta,\alpha$:
\begin{equation}\begin{array}{lll}
\alpha\coloneqq c-1=a-b,&& \theta: =-c+2a=-1+a+b;\\[2ex]
a=\frac{1+\alpha+\theta }{2},& b=\frac{1 -\alpha+\theta}{2},
\ \ \ &
 c=1+\alpha.
\end{array}
\label{lie-alg}\end{equation}

Here are the ${}_1F_1$ and ${}_2F_0$ equations in the Lie-algebraic 
parameters:
\begin{align}
\Big(z\partial_z^2+(1+\alpha-z)\partial_z-\frac{1}{2}(1+\theta +\alpha)\Big)f(z)&=0,\\
\Big(w^2\partial_w^2+\big(-1+(2+\theta)w\big)\partial_w+\big(\tfrac{1+\theta}{2}\big)^2-\big(\tfrac{\alpha}{2}\big)^2\Big)f(w)&=0.
\end{align}

We
introduce the {\em ${}_1F_1$ operator} and the {\em ${}_2F_0$ operator}:
\begin{align}
  \cF_{\alpha}(z,\partial_z)& \coloneqq -z\partial_z^2-(1+\alpha-z)\partial_z
  \\ \label{eq:confl_eq_expform}
  &=-z^{-\alpha}\e^z\partial_z
                              z^{1+\alpha}\e^{-z}\partial_z,\\
  \tilde\cF_\theta(z,\partial_w)&\coloneqq-w^2\partial_w^2-\big(-1+(2+\theta)w\big)\partial_w\\&=-w^{-\theta}\e^{-w^{-1}}\partial_w
  w^{2+\theta}\e^{w^{-1}}\partial_w.
                                  \end{align}
The ${}_1F_1$ equation and the ${}_2F_0$ equation  can be interpreted
as the eigenequation of the corresponding
 operators:
\begin{align}
\label{eq:confl_eigeneq}
\Big(-\cF_{\alpha}(z,\partial_z)-\frac{1}{2}(1+\theta 
  +\alpha)\Big)f(z)&=0,\\\label{eq:confl_eigeneq2}
  \Big(-\tilde\cF_\theta(w,\partial_w) +\big(\tfrac{1+\theta}{2}\big)^2-\big(\tfrac{\alpha}{2}\big)^2\Big)f(w)&=0.
\end{align}

We will treat the ${}_1F_1$ confluent equation as the principal one.

Here are Kummer's and Tricomi's confluent functions in the Lie-algebraic parameters:
\begin{align} 
F_{\theta ,\alpha}(z)&
\coloneqq {}_1{ F}_1\Bigl(\frac{1+\alpha+\theta }{2};1+\alpha;z\Bigr)
,\\
 {\bf F}  _{\theta ,\alpha}(z)&\coloneqq
 {}_1{\bf F}_1  \Bigl(\frac{1+\alpha+\theta }{2};1+\alpha;z\Bigr)=
 \frac{1}{\Gamma(\alpha+1)}F_{\theta ,\alpha}(z),\\
U_{\theta ,\alpha}(z)&\coloneqq U\bigl(\tfrac{1+\alpha+\theta
}{2};1+\alpha;z\bigr) = 
z^{\frac{-1-\alpha-\theta}{2}}{_{2}F_0}\bigl(\tfrac{1+\alpha+\theta
}{2},\tfrac{1-\alpha+\theta }{2};-;-z^{-1}\bigr).
\end{align}

We have four standard solutions of the ${}_1F_1$ equation:
\begin{align}
F_{\theta ,\alpha}(z)
    &=\e^zF_{-\theta ,\alpha}(-z),
    &\sim  1\text{ at }0;\\
z^{-\alpha} F _{\theta ,-\alpha}(z)
    &=z^{-\alpha} \e^z F _{-\theta ,-\alpha}(-z),
    &\sim z^{-\alpha}\text{ at }0;\\
  U_{\theta ,\alpha}(z)
  &=z^{-\alpha} U_{\theta ,-\alpha}(z),&\sim z^{-a}
  \text{ at }+\infty;\label{qua3}\\
\e^z  U_{-\theta ,\alpha}(-z)
&=(-z)^{-\alpha}\e^zU_{-\theta ,-\alpha}(-z),& 
\sim  (-z)^{ b-1}\e^z\text{ at }-\infty.
\end{align}

\subsection{Connection formulas}

The space of solutions of the confluent equation is
2-dimensional. Therefore, the Tricomi function for $\alpha\not\in\zz$
can be expressed in terms of Kummer's functions via the 
following connection formulas, whose derivation can be found in
\cite{NIST} and  \cite{De2}:
\begin{align}
\label{eq:connect_U1}
U_{\theta,\alpha}(z)
&=\frac{\pi}{\sin\pi\alpha}\Bigg(-\frac{{\bf F}  _{\theta ,\alpha}(z)}{\Gamma\left(\frac{1+\theta -\alpha}{2}\right)}    
    +\frac{z^{-\alpha} {\bf F}  _{\theta ,-\alpha}(z)}{\Gamma\left(\frac{1+\theta +\alpha}{2}\right)}\Bigg),
\\ \label{eq:connect_U2}
\e^zU_{-\theta,\alpha}(-z)
&=\frac{\pi}{\sin\pi\alpha}\Bigg(
-\frac{{\bf F}  _{\theta ,\alpha}(z)}{\Gamma\left(\frac{1-\theta -\alpha}{2}\right)}
 +\frac{(-z)^{-\alpha} {\bf F}_{\theta ,-\alpha}(z)}
{\Gamma\left(\frac{1-\theta +\alpha}{2}\right)}
\Bigg).
\end{align}

\subsection{Degenerate case}

The case of integer $\alpha$ is called degenerate.
For $\alpha=m\in\zz$ we have the identity
\begin{align}
\big(\tfrac{1-m+\theta}{2}\big)_m
F_{\theta,m}(z)=z^{-m} F_{\theta,-m}(z).\end{align}
We also have additional  integral representations:
\begin{eqnarray}\frac{1}{2\pi\i}\int\limits
_{[(z,0)^+]}\e^{t}\Big(1-\frac{z}{t}\Big)^{-a}
t^{-m-1}\d t&=&
 {\bf F}  _{-1+2a-m ,m}(z),\\[4ex]
\frac{1}{2\pi\i}\int\limits_{[(0,1)^+]}
\exp\Big({\frac{z}{t}}\Big)(1-t)^{-a}t^{-m-1}\d t
&=&
 z^{-m} {\bf F}  _{-1+2a+m ,-m}(z)
.\end{eqnarray}
There are also corresponding  generating functions
\begin{eqnarray}
\e^{t}\big(1-\tfrac{z}{t}\big)^{-a}&=&
\sum_{m\in\zz}t^m {\bf F}  _{-1+2a-m,m}(z),\\
\exp\big({\tfrac{z}{t}}\big)(1-t)^{-a}&=&
\sum_{m\in\zz}t^m { z^{-m}{\bf F}  _{-1+2a+m,-m}(z)}.\end{eqnarray}

The Tricomi function for $\alpha=m\in\zz$ has a logarithmic singularity:
\begin{align} 
U_{\theta,m}(z)=&
\frac{(-1)^{m+1}}{\Gamma(\frac{1+\theta-m}{2})}\Bigg(
\sum_{k=1}^{m}(-1)^{k-1}\frac{(k-1)!\big(\frac{1+m+\theta}{2}\big)_{-k}}{(m-k)!}z^{-k}\\
 &
 +\sum_{j=0}^\infty\frac{(\frac{1+m+\theta}{2})_j}{(m+j)!j!}\Big(\ln(z)
 +\psi\big(\tfrac{1+m+\theta}{2}+j\big)-\psi(j+1)-\psi(j+m+1)\Big)z^j
\Bigg).\notag
\end{align}

\subsection{Relationship to the modified Bessel equation}
\label{Relationship to the modified Bessel equation}
 The confluent equation for $\theta=0$ is equivalent to the 
 modified Bessel equation (hence also to the Bessel equation): 
 \begin{align}\notag&
 2z^{\nu-1}\e^{-\frac{z}{2}}
 \Big(z\partial_z^2+(1+2\nu-z)\partial_z-\tfrac12-\nu\Big)
 z^{-\nu}\e^{\frac{z}{2}}
 \\ =\
&\partial_r^2+\frac1r\partial_r-1-\frac{\nu^2}{r^2},\qquad z=2r,
\quad \alpha=2\nu.
\end{align}
The standard solutions of the confluent equation can be expressed 
in terms of standard solutions of the modified Bessel equation, 
that is, the modified Bessel function and the Macdonald function:
\begin{align}
  I_\nu (r)
&=\frac{1}{\Gamma(\nu +1)}\Big(\frac{r}{2}\Big)^\nu \e^{- r}
                                                                                       {}_1F_1\Big(\nu +{\frac12};2\nu +1;2 r\Big)\\ \notag
&=\frac{\Gamma(2\nu +1)}{\Gamma(\nu +1)}\Big(\frac{r}{2}\Big)^\nu \e^{- r}
                   {\bf F}_{0,2\nu}(2 r)\\
K_\nu (r)&=\sqrt{\frac{\pi}{2r}}
           \e^{-r}{}_2F_0\Big(\frac12+\nu ,\frac12-\nu ;-;-\frac{1}{2r}\Big)\\ \notag
  &=\sqrt{\pi}(2r)^\nu\e^{-r}U_{0,2\nu}(2r).
\end{align}

\subsection{Bilinear integrals}
\label{Bilinear integrals}

Let us start with the usual bilinear integrals of Tricomi's
functions. The following identities are known, and can be found e.g. in
\cite{DFNR} (where instead of Kummer's and Tricomi's confluent functions
the equivalent  Whittaker functions of the first and
second kind are used). Since our paper contains many comparably 
long formulas, it is convenient to introduce the notation 
\begin{align}
  f(x,y) \pm (x \leftrightarrow y) \coloneqq f(x,y) \pm f(y,x)
 \end{align}
 for some expression $f(x,y)$.
\begin{theorem}{Theorem}
For $|\Re(\alpha)| < 1$, the following identities hold:
\begin{align}
\label{eq:Uint_1}
  & \quad \int_0^\infty U_{\theta_1,\alpha}(z) 
  U_{\theta_2,\alpha}(z)\e^{-z}z^\alpha\d z \\ \notag 
  &  = \frac{2\pi}{(\theta_1 - \theta_2)\sin \pi \alpha}\Bigg( \frac{1}{\Gamma(\frac{1+\theta_1-\alpha}{2})\Gamma(\frac{1+\theta_2+\alpha}{2})} - (\theta_1 \leftrightarrow \theta_2) \Bigg),
  \quad \theta_1\neq\theta_2,
\end{align}
and 
\begin{align}
  \label{eq:Uint_2}
  \int_0^\infty  U_{\theta,\alpha}(z)^2
  \e^{-z}z^\alpha\d z
  &= \frac{\pi}{\sin \pi \alpha}
  \Bigg( \frac{\psi(\frac{1+\theta+\alpha}{2})
  -\psi(\frac{1+\theta-\alpha}{2})
  }{\Gamma(\frac{1+\theta+\alpha}{2})
  \Gamma(\frac{1+\theta-\alpha}{2})} \Bigg). 
\end{align}
In the special case $\alpha=0$, we have
\begin{align}
\label{eq:Uint_1_0}
  &\quad   \int_0^\infty U_{\theta_1,0}(z) 
  U_{\theta_2,0}(z)\e^{-z}\d z
 =  \frac{2}{\theta_1 - \theta_2}
  \frac{\psi\big(\tfrac{1+\theta_1}{2}\big)-\psi\big(\tfrac{1+\theta_2}{2}\big)}{\Gamma(\frac{1+\theta_1}{2})\Gamma(\frac{1+\theta_2}{2})} ,
  \quad \theta_1\neq\theta_2,
\end{align}
and 
\begin{align}
  \label{eq:Uint_2_0}
  \int_0^\infty  U_{\theta,0}(z)^2
  \e^{-z}\d z
  &= \frac{\psi'\big(\tfrac{1+\theta}{2}\big)
  }{\Gamma\big(\frac{1+\theta}{2}\big)^2}. 
\end{align}

\end{theorem}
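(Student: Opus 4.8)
The plan is to derive the bilinear integrals \eqref{eq:Uint_1}--\eqref{eq:Uint_2_0} from the connection formula \eqref{eq:connect_U1}, which for $\alpha\notin\zz$ expresses $U_{\theta,\alpha}$ as a linear combination of the two Kummer-type solutions ${\bf F}_{\theta,\alpha}$ and $z^{-\alpha}{\bf F}_{\theta,-\alpha}$. First I would substitute this expansion for both $U_{\theta_1,\alpha}$ and $U_{\theta_2,\alpha}$ into the integrand of \eqref{eq:Uint_1}. This produces four terms, each of the shape $\int_0^\infty {\bf F}_{\theta_1,\pm\alpha}(z){\bf F}_{\theta_2,\pm\alpha}(z)\e^{-z}z^{\alpha\pm\cdots}\d z$; the exponent of $z$ in each is $\alpha$, $\alpha-\alpha=0$, $0$, or $\alpha-2\alpha=-\alpha$. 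The ``diagonal'' terms (weights $z^{\alpha}$ and $z^{-\alpha}$) are the problematic ones: the ${}_1F_1$ series behaves like $1$ near $0$, so convergence at $0$ needs $\Re(\alpha)>-1$ for the $z^\alpha$ term and $\Re(-\alpha)>-1$ for the $z^{-\alpha}$ term, i.e. $|\Re(\alpha)|<1$ — exactly the stated hypothesis. The ``off-diagonal'' terms with weight $z^0$ converge at $0$ for any $|\Re(\alpha)|<1$ as well. Convergence at $+\infty$ is fine because $U$ decays, but once expanded into ${\bf F}$'s one must be a little careful: the combination is what decays, not the individual pieces. A clean way around this is to regard the whole computation through the generalized/analytic-continuation lens and only at the end restrict to $|\Re(\alpha)|<1$; but for a first, honest proof I would keep $|\Re(\alpha)|<1$ throughout and note that the recombined integral does converge.

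The engine for evaluating the resulting ${}_1F_1$-bilinear integrals is the classical formula for $\int_0^\infty \e^{-z}z^{s-1}{}_1F_1(a_1;c_1;z){}_1F_1(a_2;c_2;z)\d z$ in terms of a ${}_3F_2$ (or, when one of the Kummer identities applies, in terms of Gamma functions). The cleanest route exploits the self-adjointness structure visible in \eqref{eq:confl_eq_expform}: writing $\cF_\alpha = -z^{-\alpha}\e^z\partial_z z^{1+\alpha}\e^{-z}\partial_z$, the operator $\cF_\alpha$ is formally symmetric with respect to the weight $z^\alpha\e^{-z}\d z$, and ${\bf F}_{\theta_j,\alpha}$ is an eigenfunction with eigenvalue $\tfrac12(1+\theta_j+\alpha)$. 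Then for two eigenfunctions $f_1,f_2$ with eigenvalues $\lambda_1\neq\lambda_2$, integration by parts against the weight gives $(\lambda_1-\lambda_2)\int_0^\infty f_1 f_2\, z^\alpha\e^{-z}\d z = \bigl[\,z^{1+\alpha}\e^{-z}(f_1 f_2' - f_1' f_2)\,\bigr]_0^\infty$, reducing everything to boundary terms. The boundary term at $0$ is controlled by the leading behavior of the solutions (the $\sim 1$, $\sim z^{-\alpha}$ asymptotics listed after \eqref{qua3}), and at $+\infty$ it vanishes by the decay of $U$. The eigenvalue difference here is $\tfrac12(\theta_1-\theta_2)$, which is precisely the source of the $(\theta_1-\theta_2)^{-1}$ prefactor and of the $\theta_1\neq\theta_2$ restriction in \eqref{eq:Uint_1}.

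For \eqref{eq:Uint_2} (the case $\theta_1=\theta_2=\theta$) I would differentiate \eqref{eq:Uint_1} with respect to, say, $\theta_1$ and then set $\theta_1=\theta_2=\theta$, taking the limit of the difference quotient. The right-hand side of \eqref{eq:Uint_1} is $\frac{2\pi}{(\theta_1-\theta_2)\sin\pi\alpha}\bigl(g(\theta_1,\theta_2)-g(\theta_2,\theta_1)\bigr)$ with $g(\theta_1,\theta_2)=1/\bigl(\Gamma(\tfrac{1+\theta_1-\alpha}{2})\Gamma(\tfrac{1+\theta_2+\alpha}{2})\bigr)$; as $\theta_1\to\theta_2$ this is a $0/0$ limit whose value is $\frac{2\pi}{\sin\pi\alpha}\partial_{\theta_1}g(\theta_1,\theta_2)\big|_{\theta_1=\theta_2}$, and $\partial_{\theta_1}\Gamma(\tfrac{1+\theta_1-\alpha}{2})^{-1} = -\tfrac12\psi(\tfrac{1+\theta_1-\alpha}{2})\Gamma(\tfrac{1+\theta_1-\alpha}{2})^{-1}$ produces exactly the digamma combination $\psi(\tfrac{1+\theta+\alpha}{2})-\psi(\tfrac{1+\theta-\alpha}{2})$ and the denominator $\Gamma(\tfrac{1+\theta+\alpha}{2})\Gamma(\tfrac{1+\theta-\alpha}{2})$ in \eqref{eq:Uint_2}. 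One must check that differentiation under the integral sign is legitimate, which is standard on $|\Re(\alpha)|<1$ by local uniform integrability of $z$-dependent bounds. Finally, \eqref{eq:Uint_1_0} and \eqref{eq:Uint_2_0} are the $\alpha\to0$ specializations: here $\pi/\sin\pi\alpha\to\infty$, so the bracket in \eqref{eq:Uint_1} must vanish at $\alpha=0$ (it does, by symmetry of $g$ in $\pm\alpha$ when $\theta_1=\theta_2$... more precisely the bracket is $O(\alpha)$), and an $\alpha$-derivative again converts Gamma prefactors into digammas, yielding \eqref{eq:Uint_1_0}; \eqref{eq:Uint_2_0} follows by the same $\theta_1\to\theta_2$ limit as before, now producing $\psi'$.

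The main obstacle is the bookkeeping in the first step: after expanding each $U$ via \eqref{eq:connect_U1} one gets a sum of four ${}_1F_1$-bilinear integrals with different $z$-weights and different second parameters ($1+\alpha$ vs. $1-\alpha$), and one must evaluate each — the two ``mixed-parameter'' ones are the delicate cases where a Kummer-type identity collapses the ${}_3F_2$ to a ratio of Gamma functions, and assembling the four pieces so that the spurious $1/\sin\pi\alpha$ poles cancel and leave exactly one factor $\pi/\sin\pi\alpha$ is where errors creep in. The self-adjointness/integration-by-parts shortcut sidesteps most of this for \eqref{eq:Uint_1} by reducing to boundary terms, so I expect to use that as the backbone and only invoke the connection formula \eqref{eq:connect_U1} to identify the coefficients appearing in the $z\to0$ boundary contribution.
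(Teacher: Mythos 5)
Your proposal is correct and follows essentially the same route as the paper: after floating the idea of expanding both Tricomi functions into four ${}_1F_1$-bilinear integrals, you settle on exactly the paper's argument, namely the Wronskian/integration-by-parts identity coming from the factorized form \eqref{eq:confl_eq_expform}, with the boundary term at $z=0$ evaluated via the connection formula \eqref{eq:connect_U1} and the reflection formula, and with \eqref{eq:Uint_2}, \eqref{eq:Uint_1_0}, \eqref{eq:Uint_2_0} obtained as de l'H\^opital limits. (Only a cosmetic remark: the $\theta_1\to\theta_2$ limit is $\partial_{\theta_1}\bigl(g(\theta_1,\theta_2)-g(\theta_2,\theta_1)\bigr)\big|_{\theta_1=\theta_2}$, i.e.\ both terms contribute, which is what produces the full digamma difference you correctly quote.)
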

                                  
\begin{proof}
 By \eqref{eq:confl_eigeneq} and \eqref{eq:confl_eq_expform}, 
 we have 
 \begin{align}
 &\quad \frac{\theta_1-\theta_2}{2}
    \int_0^\infty U_{\theta_1,\alpha}(z) 
  U_{\theta_2,\alpha}(z)\e^{-z}z^\alpha\d z  
  \\ \notag 
&=    \int_0^\infty \big( \partial_z
        z^{1+\alpha}\e^{-z}\partial_z
    U_{\theta_1,\alpha}(z) \big)
    U_{\theta_2,\alpha}(z)
    -U_{\theta_1,\alpha}(z) 
    \partial_z z^{1+\alpha}\e^{-z}\partial_z 
    U_{\theta_2,\alpha}(z)\d z 
 \\ \notag 
&=    \lim_{z\downarrow0} \Big( 
 z^{1+\alpha}\e^{-z} \Big(
 U_{\theta_1,\alpha}(z) 
    \lrd_z   U_{\theta_2,\alpha}(z)
    \Big),
 \end{align}
 where the left-right derivative is defined by 
 $ f\lrd g \coloneqq  f \partial g - (\partial f) g$.
Then, using $|\Re(\alpha)|<1$, the connection formula 
\eqref{eq:connect_U1} and $\tfrac{\pi z}{\sin \pi z}
= \Gamma(1+z) \Gamma(1-z)$, we obtain \eqref{eq:Uint_1}.
The formulas  \eqref{eq:Uint_2}, \eqref{eq:Uint_1_0} and \eqref{eq:Uint_2_0} 
are obtained by applying the rule of de l'H\^opital.
\end{proof}

The formulas  \eqref{eq:Uint_1} and \eqref{eq:Uint_2} remain true for 
$\alpha\in\cc\setminus\zz$ if the integral is replaced by the generalized 
integral. For $\alpha\in\zz$, the generalized integral is anomalous. 
It can be computed via dimensional regulatization:

 \begin{theorem}{Theorem}
\label{prop:anomalous_tricomi}
 Let $\alpha \in \zz$. Then, the following identities hold:
\begin{align}
\label{eq:gen_Uint_1}
  &\quad
  \;\gen \int_0^\infty U_{\theta_1,\alpha}(z) 
    U_{\theta_2,\alpha}(z)\e^{-z}z^\alpha\d z \\
    &=  \frac{(-1)^\alpha}{\theta_1-\theta_2}
    \Bigg(\frac{\psi(\frac{1+\theta_1+\alpha}{2})
    +\psi(\frac{1+\theta_1-\alpha}{2})
    }{\Gamma(\frac{1+\theta_1-|\alpha|}{2})
    \Gamma(\frac{1+\theta_2+|\alpha|}{2})}
      - (\theta_1 \leftrightarrow \theta_2) \Bigg)
      \notag\\
&+\frac{(-1)^\alpha}{\Gamma(\frac{1+\theta_1+|\alpha|}{2})
\Gamma(\frac{1+\theta_2+|\alpha|}{2})}
              \sum_{k=0}^{|\alpha|-1}
\big(\tfrac{1+\theta_1-|\alpha|}{2}\big)_k
\big(\tfrac{3+\theta_2-|\alpha|}{2}+k\big)_{|\alpha|-1-k}
\notag               \\&\times                                          
\Big(-\psi(|\alpha|-k)-\psi(k+1)
+\tfrac12H_k\big(\tfrac{1+\theta_1-|\alpha|}{2}\big)
- \tfrac12H_{|\alpha|-1-k}\big(\tfrac{3+\theta_2-|\alpha|}{2}+k\big)\Big);\notag
\end{align}
\begin{align}
\label{eq:gen_Uint_2}
 &\quad\gen\int_0^\infty U_{\theta,\alpha}(z)^2 \e^{-z}z^\alpha\d z 
  \\\notag  &= \frac{ (-1)^\alpha}{2
  \Gamma\big(\tfrac{1+\theta+
              \alpha}{2}\big)\Gamma\big(\tfrac{1+\theta-\alpha}{2}\big) }
\Bigg(  
    \psi\big(\tfrac{1+\theta+|\alpha|}{2}\big)^2
    -\psi\big(\tfrac{1+\theta-|\alpha|}{2}\big)^2
   + \psi'\big(\tfrac{1+\theta+\alpha}{2}\big)
              +\psi'\big(\tfrac{1+\theta-\alpha}{2}\big)  \\
& \hspace{18ex}  -2    \sum_{k=0}^{|\alpha|-1} \frac{\psi(|\alpha|-k) +\psi(k+1)
    }{ \frac{1+\theta-|\alpha|}{2}+k}\Bigg).\notag
\end{align}
\end{theorem}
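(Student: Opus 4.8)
The plan is to prove \eqref{eq:gen_Uint_1}--\eqref{eq:gen_Uint_2} by dimensional regularization, i.e.\ by analytically continuing in $\alpha$ the formulas of the previous theorem and then extracting the anomalous generalized integral from the Laurent data at the relevant integer. Since $U_{\theta,\alpha}(z)=z^{-\alpha}U_{\theta,-\alpha}(z)$ (see \eqref{qua3}), the integrand and hence the generalized integral is invariant under $\alpha\mapsto-\alpha$, so it suffices to treat $\alpha=m$ with integer $m\ge1$; the value $\alpha=0$ is non-anomalous because the right-hand sides of \eqref{eq:Uint_1}--\eqref{eq:Uint_2} are regular there. For $\alpha\in\cc\setminus\zz$ the generalized integral equals the closed form $J(\alpha)$ of the previous theorem, which is meromorphic with a single simple pole at $\alpha=m$ coming from $1/\sin\pi\alpha$. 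Its finite part is computed by putting $\alpha=m+\epsilon$, using $\sin\pi(m+\epsilon)=(-1)^m\sin\pi\epsilon$, and Taylor-expanding the reciprocal Gamma functions through $\partial_x\,\Gamma(x)^{-1}=-\psi(x)\Gamma(x)^{-1}$; this produces a compact first expression built from $\psi$ evaluated at the points $\tfrac{1\pm\theta_i\pm m}{2}$.

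The point requiring care is that the anomalous generalized integral at $\alpha=m$ is \emph{not} this finite part. Near $z=0$ the integrand $U_{\theta_1,\alpha}(z)U_{\theta_2,\alpha}(z)\e^{-z}z^{\alpha}$ contains exactly one term, $c(\alpha)z^{-\alpha+m-1}$, whose exponent degenerates to $-1$ as $\alpha\to m$: writing $U_{\theta_i,\alpha}(z)=A_i(z)+z^{-\alpha}B_i(z)$ with $A_i,B_i$ analytic, as read off from the connection formula \eqref{eq:connect_U1}, one finds $c(\alpha)=[z^{m-1}]\big(B_1(z)B_2(z)\e^{-z}\big)$ with $B_i(z)=\frac{\pi}{\sin\pi\alpha\,\Gamma(\frac{1+\theta_i+\alpha}{2})}\,{}_1{\bf F}_1\big(\tfrac{1-\alpha+\theta_i}{2};1-\alpha;z\big)$. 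By the theory of generalized integrals in Appendix \ref{app:gen_int} --- in particular $\gen\int_0^1 z^{-1}\,\dd z=0$ whereas $\gen\int_0^1 z^{-1+\lambda}\,\dd z=\lambda^{-1}$ --- we may write $J(\alpha)=-\frac{c(\alpha)}{\alpha-m}+R(\alpha)$ with $R$ regular at $\alpha=m$, and the passage to the anomalous generalized integral replaces $\gen\int_0^1 c(\alpha)z^{-\alpha+m-1}\,\dd z=-\frac{c(\alpha)}{\alpha-m}$ by $\gen\int_0^1 c(m)z^{-1}\,\dd z=0$; hence
\[
\gen\int_0^\infty U_{\theta_1,m}(z)U_{\theta_2,m}(z)\e^{-z}z^m\,\dd z\;=\;R(m)\;=\;\big[\text{finite part of }J\text{ at }\alpha=m\big]+c'(m),\qquad c'(m)\coloneqq\partial_\alpha c(\alpha)\big|_{\alpha=m}.
\]

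It remains to compute $c'(m)$. In the convolution expansion of $c(\alpha)$ the summation indices in $B_1,B_2$ obey $p+q\le m-1$, so each of $\Gamma(1-\alpha+p)^{-1}$ and $\Gamma(1-\alpha+q)^{-1}$ has a simple zero at $\alpha=m$; this cancels the double pole of $1/\sin^2\pi\alpha$, $c(\alpha)$ is regular at $\alpha=m$, and $c'(m)$ is obtained by carrying the expansion $\Gamma(-n+\delta)^{-1}=(-1)^n n!\,\delta\big(1-\delta\,\psi(n+1)+O(\delta^2)\big)$, together with the $\alpha$-dependence of the Pochhammer symbols, of $1/\sin^2\pi\alpha$ and of the outer Gamma factors, to order $O(\delta)$. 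The resulting double sum over $p,q,r$ collapses, by means of the identity proved in Appendix \ref{app:lemmas}, to the single sum $\sum_{k=0}^{m-1}$ appearing in \eqref{eq:gen_Uint_1} together with the $\psi\big(\tfrac{1+\theta_i+m}{2}\big)$ contributions; adding this to the finite part of $J$ and simplifying with $\psi\big(\tfrac{1+\theta_i+m}{2}\big)=\psi\big(\tfrac{1+\theta_i-m}{2}\big)+H_m\big(\tfrac{1+\theta_i-m}{2}\big)$ and $\Gamma\big(\tfrac{1+\theta_i+m}{2}\big)=\big(\tfrac{1+\theta_i-m}{2}\big)_m\Gamma\big(\tfrac{1+\theta_i-m}{2}\big)$ yields \eqref{eq:gen_Uint_1}. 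Formula \eqref{eq:gen_Uint_2} then follows by letting $\theta_2\to\theta_1$ with de l'H\^opital's rule, exactly as \eqref{eq:Uint_2} was obtained from \eqref{eq:Uint_1}, the $\psi'$ terms coming from the second derivatives of the Gamma factors. The two main obstacles are (i) justifying the identity ``anomalous generalized integral $=$ finite part of $J$ plus $c'(m)$'', which relies on the analysis of anomalous generalized integrals in Appendix \ref{app:gen_int} and on there being a \emph{single} degenerating power, and (ii) the bookkeeping in the expansion of $c(\alpha)$ and the collapse of the double sum via the Appendix \ref{app:lemmas} identities.
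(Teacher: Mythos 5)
Your proposal is correct and follows essentially the same route as the paper: reduce to one sign of $\alpha$ by the symmetry \eqref{qua3}, analytically continue the non-anomalous formula, isolate via the connection formulas the single power of $z$ whose exponent degenerates to $-1$, and add the derivative of its coefficient to the finite part, with \eqref{eq:gen_Uint_2} then obtained by de l'H\^opital. The only differences are cosmetic: the paper works at $\alpha=-m$ and absorbs $\e^{-z}$ into one Kummer factor via the first Kummer identity, so the coefficient $c(\alpha)$ appears as a double rather than triple convolution, which slightly shortens the bookkeeping you describe in your last paragraph.
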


\begin{proof}
The generalized integral \eqref{eq:gen_Uint_1} can be computed 
by dimensional regularization as outlined in Appendix 
\ref{app:gen_int}, to which we refer the reader for details on the general scheme.
By \eqref{qua3},
the integrand is symmetric under 
$\alpha\leftrightarrow-\alpha$. Hence, it is sufficient to 
determine the generalized integral for $-\alpha\in\nn$.

Actually, it is convenient to pull out a prefactor and determine 
the generalized integral over the auxiliary function 
\begin{align}
 f(\alpha,z) \coloneqq \Gamma\big(\tfrac{1+\theta_1-\alpha}{2} \big)
\Gamma\big(\tfrac{1+\theta_2-\alpha}{2} \big)
U_{\theta_1,\alpha}(z) 
    U_{\theta_2,\alpha}(z)\e^{-z}z^\alpha.
\end{align}
By \eqref{eq:Uint_1},
\begin{align}
  \int_0^\infty f(\alpha,z)\d z=\frac{2\pi}{(\theta_1-\theta_2)\sin\pi\alpha}\Bigg(\frac{\Gamma(\frac{1+\theta_2-\alpha}{2})}{\Gamma(\frac{1+\theta_2+\alpha}{2})}-\frac{\Gamma(\frac{1+\theta_1-\alpha}{2})}{\Gamma(\frac{1+\theta_1+\alpha}{2})}\Bigg).\end{align}
For $m\in\nn$, one then finds for the finite part (cf. Appendix \ref{app:gen_int})
\begin{align}
\label{eq:gen_Uint_fp}
& \fp \int_0^\infty f(\alpha,z) \dd z \Big|_{\alpha=-m}\\
 =& \frac{ (-1)^m}{\theta_1-\theta_2}
 \Bigg( \big(\tfrac{1+\theta_1-m}{2}\big)_{m} 
 \Big(\psi\big(\tfrac{1+\theta_1+ m}{2}\big)
 +\psi\big(\tfrac{1+\theta_1 - m}{2}\big)\Big)
 - (\theta_1 \leftrightarrow \theta_2)\Bigg).
\end{align}
Let us write
\begin{align}
\label{eq:tricomi_f_expansion}
 f(\alpha,z) \coloneqq  z^\alpha\Big(\Gamma\big(\tfrac{1+\theta_1-\alpha}{2} \big)
U_{\theta_1,\alpha}(z)\Big)
\Big(\Gamma\big(\tfrac{1+\theta_2-\alpha}{2} \big) 
    U_{\theta_2,\alpha}(z)\e^{-z}\Big)
\end{align}

Recalling that $\alpha<0$ one sees that negative powers  
in \eqref{eq:tricomi_f_expansion} come from one source: 
the two terms in the brackets can be rewritten using the 
connection formulas \eqref{eq:connect_U1} and \eqref{eq:connect_U2}. 
This yields a sum of four terms, but three of them have at least a power 
$z^{-\alpha}$. The only term which contains a singularity at $z=0$ is 
the product of 
\begin{align}
-\frac{\pi}{\sin\pi\alpha}{\bf F}_{\theta_1,\alpha}(z)
=\sum_{k=0}^\infty\frac{(-1)^k\big(\frac{1+\theta_1-\alpha}{2}\big)_kz^k\Gamma(-\alpha-k)}{k!},
\end{align}
and 
\begin{align} 
 -\frac{\pi}{\sin\pi\alpha}{\bf F}_{-\theta_2,\alpha}(-z)
=\sum_{k=0}^\infty\frac{\big(\frac{1-\theta_2-\alpha}{2}\big)_kz^k\Gamma(-\alpha-k)}{k!}.
\end{align}
Therefore, if $p\leq |\alpha|=-\alpha$, the coefficient of
$f(\alpha,z)$ at $z^{\alpha+p}$ is
\begin{align}
 f_{p}(\alpha) &= \sum_{k=0}^{p} \frac{(-1)^k
 \big(\tfrac{1+\theta_1+\alpha}{2}\big)_{k} 
\big(\tfrac{1-\theta_2+\alpha}{2}\big)_{p-k}\Gamma(-\alpha-k)\Gamma(-\alpha-p+k)}{k!\, (p-k)!}
                 \\
  &= \sum_{k=0}^{p} \frac{(-1)^{p} 
 \big(\tfrac{1+\theta_1+\alpha}{2}\big)_{k} 
\big(\tfrac{1+\theta_2-\alpha}{2}-p+k\big)_{p-k}\Gamma(-\alpha-k)\Gamma(-\alpha-p+k)}{k!\, (p-k)!}
,
\end{align}
where we used $(z)_k=(-1)^k(1-z-k)_k$ in the last step.
Finally, we use
\begin{align}
\gen\int_0^\infty f(-m,z)=\fp\int_0^\infty f(\alpha,z)\d
z\Big|_{\alpha=-m}-
\partial_\alpha f_{m-1}(\alpha)\Big|_{\alpha=-m}.
\end{align}

We next prove \eqref{eq:gen_Uint_2}.
 Applying the rule of de l'H\^opital to the first line of the right
 hand side of \eqref{eq:gen_Uint_1} we obtain
 \begin{equation}
   \frac{ (-1)^\alpha}{2
  \Gamma\big(\tfrac{1+\theta+
              \alpha}{2}\big)\Gamma\big(\tfrac{1+\theta-\alpha}{2}\big) }
\Bigg(  
    \psi\big(\tfrac{1+\theta+|\alpha|}{2}\big)^2
    -\psi\big(\tfrac{1+\theta-|\alpha|}{2}\big)^2
   + \psi'\big(\tfrac{1+\theta+\alpha}{2}\big)
              +\psi'\big(\tfrac{1+\theta-\alpha}{2}\big)  \Bigg).\end{equation}
Next
we use 
 \begin{align}
  \big(\tfrac{1+\theta-|\alpha|}{2}\big)_k
    \big(\tfrac{3+\theta-|\alpha|}{2}+k\big)_{|\alpha|-1-k}
    =    \frac{\big(\tfrac{1+\theta-|\alpha|}{2}\big)_{|\alpha|}
    }{ \frac{1+\theta-|\alpha|}{2}+k }
 \end{align}
to transform the last two lines of the right hand side of
\eqref{eq:gen_Uint_1} into
\begin{align}
  \frac{ (-1)^\alpha}{
  \Gamma\big(\tfrac{1+\theta+
              \alpha}{2}\big)\Gamma\big(\tfrac{1+\theta-\alpha}{2}\big) }
    &\sum_{k=0}^{|\alpha|-1} \frac{1}{ \frac{1+\theta-|\alpha|}{2}+k} \Bigg( 
    -\psi(|\alpha|-k) -\psi(k+1) \\ \notag &
    +\frac12H_k\big(\tfrac{1+\theta-|\alpha|}{2}\big)
    -\frac12H_{|\alpha|-1-k}\big(\tfrac{3+\theta-|\alpha|}{2}+k\big)
    \Bigg)
  .\notag
\end{align}
To simplify this sum, we use the following identity that can 
be proven inductively: 
\begin{equation}\label{induction}
  \sum_{k=0}^{m-1}\frac{H_k(z)}{z+k}=\frac12\big(H_m'(z)+H_m(z)^2\big).\end{equation} 
Namely, using \eqref{induction} we obtain
\begin{align}
  &  \sum_{k=0}^{|\alpha|-1}
\frac{H_k\big(\tfrac{1+\theta-|\alpha|}{2}\big)-H_{|\alpha|-1-k}\big(\tfrac{3+\theta-|\alpha|}{2}+k\big) }{ \frac{1+\theta-|\alpha|}{2}+k}
  \\
=&  \sum_{k=0}^{|\alpha|-1}
\Bigg(\frac{2H_k\big(\tfrac{1+\theta-|\alpha|}{2}\big)-H_{|\alpha|}\big(\tfrac{1+\theta-|\alpha|}{2}\big) }{ \frac{1+\theta-|\alpha|}{2}+k}+\frac{1}{(\frac{1+\theta-|\alpha|}{2}  +  k)^2}\Bigg)
   \notag\\
  =&H_{|\alpha|}'\big(\tfrac{1+\theta-|\alpha|}{2}\big)+H_{|\alpha|}\big(\tfrac{1+\theta-|\alpha|}{2}\big)^2-H_{|\alpha|}\big(\tfrac{1+\theta-|\alpha|}{2}\big)^2-H_{|\alpha|}'\big(\tfrac{1+\theta-|\alpha|}{2}\big)\quad=\quad0.\notag
\end{align} 
\end{proof}

Note that for special choices of the parameters $\theta_1$, $\theta_2$, 
$\theta$, the right-hand sides of \eqref{eq:Uint_1}, \eqref{eq:Uint_2}, 
\eqref{eq:Uint_1_0}, \eqref{eq:Uint_2_0}, \eqref{eq:gen_Uint_1} and 
\eqref{eq:gen_Uint_2} may contain terms of the form $0$ divided 
by $0$. These special choices of parameters include the case when the 
Tricomi functions reduce to (multiples of) Laguerre polynomials.  
We therefore treat this case separately in the following section. 
We will  show that the singularities are merely 
apparent and that the right-hand sides of the mentioned equations 
also make sense for the described special choice of parameters.

\section{Laguerre polynomials}
\label{Laguerre polynomials}

\subsection{Basic properties}

We start with a concise overview of basic properties of Laguerre 
polynomials. They can be found in various sources 
such as \cite{NIST,De1}.

The Laguerre polynomials are traditionally defined by a  
Rodrigues-type formula: 
\begin{align}
\label{eq:Rodriguez}
L_n^\alpha(z):&=\frac{1}{n!}\e^zz^{-\alpha}\partial_z^n
  \e^{-z}z^{n+\alpha}\\
&=       (-1)^n\sum_{k=0}^n\frac{(-\alpha-n)_{n-k}
                                       z^{k}}{k! (n-k)!}.   
\end{align}
which is meaningful for all complex $\alpha\in\cc$. For real 
$\alpha>-1$, the Laguerre polynomials are one of the sets of 
classical orthogonal polynomials. Their natural interval is $[0,\infty[$ 
and the weight is $w(z) = \e^{-z} z^\alpha$.

Up to a normalization, Laguerre polynomials are special cases of 
confluent functions:
\begin{align}
L_n^\alpha(z)&=\frac{(1+\alpha)_n}{n!}F(-n;1+\alpha;z)\\
&=\frac{\Gamma(1+\alpha+n)}{n!}{\bf F}_{-1-\alpha-2n,\alpha}(z)
\\&
 =\frac{(-z)^n}{n!}{}_2F_0(-n, -\alpha-n;-;-z^{-1})
 \\&= \frac{(-1)^n}{n!}U _{-1-\alpha-2n,\alpha}(z).\label{triccomi}
 \end{align}
Their generating functions are:
\begin{align}
\e^{-tz}(1+t)^{\alpha}
=\sum\limits_{n=0}^\infty t^n L_n^{\alpha-n}(z),\\
(1-t)^{-\alpha-1}
\exp{\frac{tz}{t-1}}=\sum\limits_{n=0}^\infty t^nL_n^{\alpha}(z).
\end{align}
They have the integral representation
\begin{align}
L_n^\alpha(z)&=\frac{1}{2\pi\i}\int\limits_{[0^+]}
\e^{-tz}(1+t)^{\alpha+n}t^{-n-1}\d t.
\end{align}
 The 
value at 0 and behavior at $\infty$:
\begin{align}
L_n^\alpha(0)=\frac{(\alpha+1)_n}{n!},\ \  \ \ 
\lim\limits_{z\to\infty}\frac{L_n^\alpha(z)}{z^n}=\frac{(-1)^n}{n!}.
\end{align}

For $\alpha\in \nn$ with $\alpha\leq n$, we have \cite{Szego39}
\begin{align}
\label{eq:special_alpha_Laguerre}
L_n^{-\alpha}(z)&= \frac{(n-\alpha)!}{n!} (-z)^{\alpha}
L_{n-\alpha}^{\alpha}(z),\quad \alpha\in\nn,\;\alpha\leq n. 
\end{align} 
Note that in this case, $L_n^{-\alpha}(z)$ has a zero of order 
$\alpha$ at the origin.

\subsection{Bilinear integrals}

The  bilinear identity for standard integrals, which for real
$\alpha>-1$ expresses the orthogonality relations, is well-known:
\begin{theorem}{Theorem}Let $\Re (\alpha) >-1$. Then
  \begin{align}
\label{eq:scalar_product_laguerre}
 \int_0^\infty L_m^\alpha(z) L_n^\alpha(z) z^\alpha \e^{-z} \dd z 
 = \frac{\Gamma(1+n+\alpha)}{n!}\delta_{m,n}.
\end{align}
\end{theorem}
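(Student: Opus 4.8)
The plan is to follow the classical textbook argument based on the Rodrigues formula \eqref{eq:Rodriguez} and repeated integration by parts. Assume without loss of generality that $m \le n$. Write one of the two factors, say $L_m^\alpha(z)$, as an ordinary polynomial of degree $m$, and substitute the Rodrigues formula for the other factor, so that
\begin{align*}
\int_0^\infty L_m^\alpha(z) L_n^\alpha(z) z^\alpha \e^{-z}\,\dd z
= \frac{1}{n!}\int_0^\infty L_m^\alpha(z)\, \big(\partial_z^n \e^{-z} z^{n+\alpha}\big)\,\dd z.
\end{align*}
Now integrate by parts $n$ times, moving all $n$ derivatives onto $L_m^\alpha$. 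The key point is that for $\Re(\alpha) > -1$ the boundary terms vanish: at $z = 0$ the factor $z^{k+\alpha}$ (with $k \ge 1$ in every boundary term produced along the way, since each integration by parts against $\partial_z^j(\e^{-z}z^{n+\alpha})$ still carries at least one positive power of $z$ once $\Re(\alpha) > -1$) kills the contribution, and at $z = \infty$ the exponential $\e^{-z}$ dominates every polynomial factor. This is exactly the step where the hypothesis $\Re(\alpha) > -1$ is used; it is the only subtle point, though here it is routine rather than a genuine obstacle.

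After the $n$ integrations by parts we are left with
\begin{align*}
\frac{(-1)^n}{n!}\int_0^\infty \big(\partial_z^n L_m^\alpha(z)\big)\, \e^{-z} z^{n+\alpha}\,\dd z.
\end{align*}
If $m < n$, then $\partial_z^n L_m^\alpha = 0$ because $L_m^\alpha$ has degree $m$, giving $0$ and hence the off-diagonal vanishing. If $m = n$, then $\partial_z^n L_n^\alpha$ is the constant equal to $n!$ times the leading coefficient of $L_n^\alpha$; from the explicit expansion in \eqref{eq:Rodriguez}, the leading term is $(-1)^n z^n/n!$, so $\partial_z^n L_n^\alpha(z) = (-1)^n$. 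Substituting this in leaves
\begin{align*}
\frac{(-1)^n}{n!}\cdot (-1)^n \int_0^\infty \e^{-z} z^{n+\alpha}\,\dd z
= \frac{1}{n!}\,\Gamma(1+n+\alpha),
\end{align*}
which is the claimed value. The convergence of this last Gamma integral again requires $\Re(n+\alpha) > -1$, which holds since $\Re(\alpha) > -1$ and $n \ge 0$.

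I do not anticipate a real obstacle here: the whole argument is the standard one, and the only place care is needed is checking that every boundary term in the iterated integration by parts vanishes under $\Re(\alpha) > -1$. The authors remark that they prefer this direct route precisely because it will later adapt, essentially verbatim, to the generalized-integral setting treated in Appendix \ref{app:gen_int}; so the emphasis in writing the proof should be on making the integration-by-parts bookkeeping explicit (the general boundary term after $j$ steps being a multiple of $z^{n+\alpha-j}\e^{-z}$ times a polynomial, evaluated at $0$ and $\infty$), rather than on any clever trick.
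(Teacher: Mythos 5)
Your proof is correct and is essentially the paper's own argument: apply the Rodrigues formula to the higher-degree factor, integrate by parts that many times (the boundary terms vanishing because $\Re(\alpha)>-1$), and reduce to $\partial_z^{\max(m,n)}$ of the lower-degree polynomial, which is $(-1)^n\delta_{m,n}$, leaving the Gamma integral. The only difference is the cosmetic choice of which index is assumed larger.
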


\proof For completeness we sketch the proof.
Assume without loss of generality that $m\geq n$. Applying the 
Rodrigues formula \eqref{eq:Rodriguez}
for $L_m^\alpha$, and then integrating $m$ times by parts and
using $\Re(\alpha)>-1$, one obtains\begin{align}
                                   \int_0^\infty L_m^\alpha(z) L_n^\alpha(z) z^\alpha \e^{-z} \dd z&=
                                                                                                     \int_0^\infty\Big(\partial_z\e^{-z}z^{m+\alpha}\Big)L_n^\alpha(z)\dd z\\
 &= \frac{(-1)^m}{m!} \int_0^\infty  
   \e^{-z}z^{m+\alpha}  \partial_z^m  L_n^\alpha(z)  \dd z.
\end{align}
Because $m\geq n$, we find $\partial_z^m  L_n^\alpha(z) 
= (-1)^m \delta_{m,n}$ with the Kronecker delta on the right-hand side. 
Then the remaining integral is nothing but the integral 
representation of $\Gamma(1+n+\alpha)$. \qed

If we replace the integral on the left-hand side of 
\eqref{eq:scalar_product_laguerre} with the generalized integral, 
formula \eqref{eq:scalar_product_laguerre} remains valid if 
$\alpha\in\cc\setminus-\nn$, because then the generalized integral 
is non-anomalous and can be computed by analytic continuation:

\begin{theorem}{Theorem}
\label{thm3.2}
Let $\alpha \in\cc\setminus-\nn$. Then
\eqref{eq:scalar_product_laguerre} remains true if we replace the 
usual integral with the generalized integral.
\end{theorem}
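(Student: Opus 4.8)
The plan is to extend the textbook proof of \eqref{eq:scalar_product_laguerre} to the generalized integral by invoking the key structural fact, established in Appendix \ref{app:gen_int}, that for $\alpha\notin-\nn$ the generalized integral of the relevant integrands is \emph{non-anomalous} and therefore agrees with the analytic continuation in $\alpha$ of the convergent integral. Concretely, fix $m\geq n$ without loss of generality. The integrand $L_m^\alpha(z)L_n^\alpha(z)z^\alpha\e^{-z}$ is, for each fixed $z>0$, a polynomial in $z$ times $z^\alpha\e^{-z}$, hence holomorphic in $\alpha$; near $z=0$ its only non-smooth behaviour is the factor $z^\alpha$, and near $z=\infty$ the $\e^{-z}$ gives rapid decay. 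By the general theory recalled in the appendix, the map $\alpha\mapsto\gen\int_0^\infty L_m^\alpha(z)L_n^\alpha(z)z^\alpha\e^{-z}\d z$ is meromorphic, with poles only possibly at the points where the generalized integral becomes anomalous, i.e. at $\alpha\in-\nn$ (where $z^{\alpha+k}=z^{-1}$ can occur among the monomials $z^k$ coming from the polynomial part). For $\Re(\alpha)>-1$ this generalized integral coincides with the ordinary convergent integral, whose value is $\frac{\Gamma(1+n+\alpha)}{n!}\delta_{m,n}$ by the previous theorem.

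The second step is then pure analytic continuation: both sides of \eqref{eq:scalar_product_laguerre} are meromorphic functions of $\alpha$ on $\cc$, they agree on the half-plane $\Re(\alpha)>-1$, hence they agree on the whole domain of holomorphy, which includes all of $\cc\setminus-\nn$. The right-hand side $\frac{\Gamma(1+n+\alpha)}{n!}\delta_{m,n}$ is manifestly holomorphic on $\cc\setminus-\nn$ (the poles of $\Gamma(1+n+\alpha)$ sit precisely at $\alpha\in-1-n-\nn_0\subset-\nn$), which is a reassuring consistency check that no spurious poles of the left-hand side can survive in $\cc\setminus-\nn$.

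Alternatively — and this is the route I would actually present, since it mirrors the remark in the paper that ``integration by parts works quite well also for generalized integrals'' — one can redo the Rodrigues-formula computation directly with the generalized integral in place of the ordinary one. Apply \eqref{eq:Rodriguez} to $L_m^\alpha$, writing the integrand as $\frac{1}{m!}\big(\partial_z^m\e^{-z}z^{m+\alpha}\big)L_n^\alpha(z)$, and integrate by parts $m$ times using the integration-by-parts rule for generalized integrals from Appendix \ref{app:gen_int}. The boundary terms at $z=\infty$ vanish because of $\e^{-z}$; the boundary terms at $z=0$ involve $z^{j+\alpha}$ with $0\le j\le m-1$ (times polynomials and their derivatives), and these have vanishing generalized boundary contribution precisely when $\alpha\notin-\nn$ — this is where the hypothesis enters, and the general-theory input is exactly that the relevant ``$\rv_0$'' boundary functionals annihilate $z^{j+\alpha}$ for $j+\alpha\notin-\nn_0$. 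One is left with $\frac{(-1)^m}{m!}\gen\int_0^\infty\e^{-z}z^{m+\alpha}\,\partial_z^m L_n^\alpha(z)\,\d z$; since $\partial_z^m L_n^\alpha(z)=(-1)^m\delta_{m,n}$ (because $\deg L_n^\alpha=n\le m$), this collapses to $\delta_{m,n}\,\gen\int_0^\infty\e^{-z}z^{n+\alpha}\,\d z$, and this last generalized integral is non-anomalous for $\alpha\notin-\nn$ and equals $\Gamma(1+n+\alpha)$ by analytic continuation of Euler's integral.

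The main obstacle is not any single computation but rather the careful bookkeeping of boundary terms in the repeated integration by parts: one must check that at each of the $m$ steps the generalized-integral boundary functional at $0$ annihilates the term produced, and to do this cleanly one needs the precise form of the integration-by-parts identity for $\gen\int$ and of the condition under which $\rv_0$ of a power is zero. Both are supplied by Appendix \ref{app:gen_int}, so the argument reduces to verifying that the exponents appearing, namely $j+\alpha$ for $0\le j\le m-1$, never lie in $-\nn_0$ — which is exactly the assumption $\alpha\notin-\nn$ (noting $j\ge 0$, so $j+\alpha\in-\nn_0$ would force $\alpha\in-\nn_0-j\subseteq-\nn_0$; and $\alpha=0$ is excluded, while $\alpha\in-\nn$ is excluded by hypothesis, so in fact $\alpha\notin-\nn$ suffices). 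I would state this exponent check as a short lemma or inline remark and otherwise let the two arguments above — analytic continuation, or direct integration by parts — each stand as a complete proof.
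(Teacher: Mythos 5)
Your first argument is exactly the paper's proof: for $\alpha\notin-\nn$ the integrand's expansion at $0$ contains no power $z^{-j}$ with $j\in\nn$, so the generalized integral is non-anomalous and, by the dimensional-regularization/Morera argument of Appendix \ref{app:gen_int}, is the analytic continuation in $\alpha$ of the convergent integral \eqref{eq:scalar_product_laguerre}. Your alternative integration-by-parts route is also sound and is in fact the method the paper reserves for the anomalous case $\alpha\in-\nn$; the only slip is in your exponent bookkeeping at the end: the boundary terms after $k$ integrations by parts involve powers $z^{\alpha+k+i+j}$ with $k\geq1$ (not $z^{\alpha+j}$ with $j\geq0$), and $\rv$ kills a power $z^{\beta}$ whenever $\beta\neq0$ (not only for $\beta\notin-\nn_0$), so the case $\alpha=0$ — which is \emph{not} excluded by the hypothesis — causes no trouble and needs no special pleading.
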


The case of negative integer $\alpha$ is more complicated.
The bilinear integral reduces to the standard integral also for negative integer 
$\alpha$ provided that both $m$ and $n$ are not less than $|\alpha|$:
\begin{theorem}{Theorem} \label{thm3.3}
Let $\alpha\in-\nn$ and $m,n\geq|\alpha|$. Then
\eqref{eq:scalar_product_laguerre} remains true (in the sense of usual
integrals):
  \begin{align}
\label{eq:scalar_product_laguerre_non_anomalous}
\int_0^\infty L_m^\alpha(z) L_n^\alpha(z) z^\alpha \e^{-z} \dd z 
= \frac{\Gamma(1+n+\alpha)}{n!}\delta_{m,n}=\frac{(n-|\alpha|)!}{n!}\delta_{m,n}
\end{align}
\end{theorem}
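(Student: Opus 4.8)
The plan is to reduce the claimed identity to the already-proven orthogonality relation for positive parameters by exploiting the factorization formula \eqref{eq:special_alpha_Laguerre}. Write $\alpha = -\beta$ with $\beta \in \nn$, and suppose $m, n \geq \beta$. By \eqref{eq:special_alpha_Laguerre},
\begin{align}
L_m^{-\beta}(z) = \frac{(m-\beta)!}{m!}(-z)^\beta L_{m-\beta}^\beta(z), \qquad
L_n^{-\beta}(z) = \frac{(n-\beta)!}{n!}(-z)^\beta L_{n-\beta}^\beta(z).
\end{align}
Substituting both into the bilinear integral, the two factors of $(-z)^\beta$ combine to $z^{2\beta}$, and together with the weight $z^{\alpha} = z^{-\beta}$ this produces the new weight $z^{\beta} \e^{-z}$. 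Thus
\begin{align}
\int_0^\infty L_m^{-\beta}(z) L_n^{-\beta}(z) z^{-\beta} \e^{-z}\,\dd z
= \frac{(m-\beta)!\,(n-\beta)!}{m!\,n!}\int_0^\infty L_{m-\beta}^\beta(z) L_{n-\beta}^\beta(z)\, z^{\beta} \e^{-z}\,\dd z.
\end{align}
Crucially, since $\beta \geq 1 > -1$, the right-hand integral is an honest convergent integral and falls under the already-established Theorem with $\Re(\alpha) > -1$; there is no need for the generalized integral here, which is why the statement can be phrased "in the sense of usual integrals."

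Now I would simply invoke \eqref{eq:scalar_product_laguerre} with parameter $\beta$ and indices $m-\beta$, $n-\beta$ (both nonnegative by hypothesis):
\begin{align}
\int_0^\infty L_{m-\beta}^\beta(z) L_{n-\beta}^\beta(z)\, z^{\beta}\e^{-z}\,\dd z
= \frac{\Gamma(1 + (n-\beta) + \beta)}{(n-\beta)!}\,\delta_{m-\beta,\,n-\beta}
= \frac{n!}{(n-\beta)!}\,\delta_{m,n},
\end{align}
using that $\delta_{m-\beta,n-\beta} = \delta_{m,n}$. Plugging this back gives
\begin{align}
\int_0^\infty L_m^{-\beta}(z) L_n^{-\beta}(z)\, z^{-\beta}\e^{-z}\,\dd z
= \frac{(m-\beta)!\,(n-\beta)!}{m!\,n!}\cdot \frac{n!}{(n-\beta)!}\,\delta_{m,n}
= \frac{(m-\beta)!}{m!}\,\delta_{m,n}.
\end{align}
On the diagonal $m = n$ this equals $\frac{(n-\beta)!}{n!}$, which is exactly $\frac{\Gamma(1+n+\alpha)}{n!}$ since $\Gamma(1+n-\beta) = (n-\beta)!$ for $n \geq \beta$. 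This matches both expressions on the right-hand side of \eqref{eq:scalar_product_laguerre_non_anomalous}, completing the argument.

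I do not anticipate a serious obstacle: the proof is essentially a change of variables in the polynomial identity followed by citing the classical case. The one point that deserves a word of care is verifying that the hypotheses genuinely guarantee convergence — namely that $m, n \geq \beta$ is exactly what makes the prefactor indices $m-\beta, n-\beta$ nonnegative integers (so that $L_{m-\beta}^\beta$, $L_{n-\beta}^\beta$ are bona fide polynomials and \eqref{eq:special_alpha_Laguerre} applies in the stated form), and that $\beta \geq 1$ is what places us strictly inside the convergence regime $\Re(\alpha) > -1$ of the classical theorem. If one instead had, say, $m \geq \beta$ but $n < \beta$, the factorization would only apply to one factor and the integral could fail to converge, which is precisely the anomalous regime treated elsewhere in the paper. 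So the only thing to be careful about is matching the index bookkeeping and the $\Gamma$-versus-factorial identification, both of which are routine.
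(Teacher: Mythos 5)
Your proof is correct and is essentially identical to the paper's own argument: both apply the factorization \eqref{eq:special_alpha_Laguerre} to each Laguerre polynomial, absorb the resulting $z^{2|\alpha|}$ into the weight, and invoke the classical orthogonality relation \eqref{eq:scalar_product_laguerre} with parameter $|\alpha|$ and shifted indices. The index bookkeeping and the $\Gamma$-versus-factorial identification are handled correctly.
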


\proof
 By the identity \eqref{eq:special_alpha_Laguerre}
\begin{align}
 &\quad 
 \gen\int_0^\infty L_m^\alpha(z) L_n^\alpha(z) z^\alpha \e^{-z} \dd z
 \\ \notag 
 &=\frac{(m-|\alpha|)!(n-|\alpha|)!}{m!n!} \int_0^\infty  
L_{m-|\alpha|}^{|\alpha|}(z) 
L_{n-|\alpha|}^{|\alpha|}(z)  z^{|\alpha|} \e^{-z} \dd z
\\ \notag 
  &= \frac{(n-|\alpha|)!}{n!} \delta_{m,n}.
\end{align} \qed

For low degree Laguerre polynomials and  negative integer $\alpha$ we 
need anomalous  integrals:

\begin{theorem}{Theorem}
 \label{prop:anomalous_laguerre}
 Let $-\alpha \in \nn$, $m,n\in\nn_0$ such that 
 $|\alpha|>n$ and, without loss of generality, $m\geq n$. 
 Then 
 \begin{align}
 \label{eq:gen_Lint}
   \gen\int_0^\infty L_m^\alpha(z) L_n^\alpha(z) z^\alpha \e^{-z} \dd z 
   &= \frac{(-1)^{\alpha+n}}{n! (|\alpha|-n-1)!(m-n)}
         ,\qquad n<m;
   \\   \gen\int_0^\infty L_n^\alpha(z)^2 z^\alpha \e^{-z} \dd z 
   &=  \frac{(-1)^{\alpha+n+1}}{n! (|\alpha|-n-1)!}
  \psi(|\alpha|-n) . \end{align}
\end{theorem}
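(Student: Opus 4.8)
The plan is to mimic the textbook proof of the orthogonality relations via the Rodrigues formula and integration by parts, but now in the framework of generalized integrals, relying on the integration-by-parts property announced in Appendix~\ref{app:gen_int}. Fix $-\alpha\in\nn$ and $m\geq n$ with $|\alpha|>n$. First I would apply the Rodrigues formula \eqref{eq:Rodriguez} to the factor $L_m^\alpha$, writing $L_m^\alpha(z)z^\alpha\e^{-z}=\tfrac{1}{m!}\partial_z^m\big(\e^{-z}z^{m+\alpha}\big)$, so that the integrand becomes $\tfrac{1}{m!}\big(\partial_z^m\e^{-z}z^{m+\alpha}\big)L_n^\alpha(z)$. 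Then I would integrate by parts $m$ times inside the generalized integral. The crucial point, different from the classical case, is that since $|\alpha|>n$ we have $m+\alpha=m-|\alpha|$, which may be negative, so the factor $\e^{-z}z^{m+\alpha}$ is singular at $0$ and the boundary terms at $z\downarrow 0$ need not vanish; these boundary contributions are exactly what produces the anomalous answer. So the integration by parts must be carried out carefully, keeping track of the boundary terms dictated by the generalized-integral integration-by-parts formula from the appendix.

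After the $m$ integrations by parts, the ``bulk'' term is $\tfrac{(-1)^m}{m!}\gen\int_0^\infty \e^{-z}z^{m+\alpha}\,\partial_z^m L_n^\alpha(z)\,\d z$. Since $\deg L_n^\alpha=n\le m$, we get $\partial_z^m L_n^\alpha(z)=(-1)^m\delta_{m,n}$ (using the normalization $\lim_{z\to\infty}L_n^\alpha(z)/z^n=(-1)^n/n!$), so when $m>n$ this bulk term vanishes identically, and when $m=n$ it reduces to $\tfrac{1}{n!}\gen\int_0^\infty \e^{-z}z^{n+\alpha}\,\d z=\tfrac{1}{n!}\gen\int_0^\infty\e^{-z}z^{n-|\alpha|}\,\d z$. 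This last generalized integral of $\e^{-z}z^{n-|\alpha|}$ with $n-|\alpha|\le-1$ is a negative-integer case, hence anomalous; I would evaluate it by the standard recipe (analytic continuation of $\Gamma(1+n+\alpha)$ in $\alpha$, whose pole is removed by taking the finite part, equivalently dimensional regularization as in Appendix~\ref{app:gen_int}), which yields a $\psi$-value: concretely $\gen\int_0^\infty\e^{-z}z^{-j-1}\d z=\tfrac{(-1)^{j+1}}{j!}\,\psi(j+1)$ up to the appropriate normalization, giving the $\psi(|\alpha|-n)$ appearing on the right-hand side of the diagonal formula.

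The remaining work is to collect the boundary terms from the $m$-fold integration by parts and show they contribute the stated expressions. After $j$ integrations by parts the boundary term at $z\downarrow0$ involves $\big(\partial_z^{\,m-1-j}\e^{-z}z^{m+\alpha}\big)\big(\partial_z^{\,j}L_n^\alpha(z)\big)$ evaluated at $0$; the first factor behaves like a nonzero multiple of $z^{m+\alpha-(m-1-j)}=z^{\alpha+1+j}$ up to lower order, i.e. like $z^{\,j+1-|\alpha|}$, and since $j$ ranges up to $m-1$ and $|\alpha|>n$, exactly the values $j$ with $j+1-|\alpha|$ a nonpositive integer produce a surviving (generalized) boundary contribution of order $z^0$; the rest vanish. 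Matching powers, one finds a single genuinely contributing index (when $n<m$, it pins down the factor $\tfrac{1}{(m-n)}$ after using $L_n^\alpha(0)=(\alpha+1)_n/n!=(-1)^n(|\alpha|-n)!/(n!\,(|\alpha|)!)$-type evaluations together with the explicit low-order coefficients of $\e^{-z}z^{m+\alpha}$), and assembling the Pochhammer and factorial bookkeeping yields $\tfrac{(-1)^{\alpha+n}}{n!\,(|\alpha|-n-1)!\,(m-n)}$ in the off-diagonal case and combines with the bulk $\psi$-term in the diagonal case. I expect the main obstacle to be precisely this boundary-term bookkeeping: correctly invoking the generalized-integral integration-by-parts rule from Appendix~\ref{app:gen_int} (since the intermediate integrands are themselves only integrable in the generalized sense and some steps are anomalous), and then carefully tracking which of the $m$ boundary terms survive and with what coefficient. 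As an independent check, one can instead derive both formulas from Theorem~\ref{prop:anomalous_tricomi} via the identification \eqref{triccomi}, $L_n^\alpha=\tfrac{(-1)^n}{n!}U_{-1-\alpha-2n,\alpha}$, taking $\theta_i=-1-\alpha-2n_i$; the apparent $0/0$ indeterminacies in \eqref{eq:gen_Uint_1}--\eqref{eq:gen_Uint_2} for these special $\theta$ values resolve (by de l'Hôpital, as remarked after Theorem~\ref{prop:anomalous_tricomi}) to the same right-hand sides, which would confirm the result.
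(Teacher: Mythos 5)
Your overall strategy --- the Rodrigues formula for $L_m^\alpha$, repeated integration by parts in the sense of generalized integrals, collecting the regular-value boundary terms at $z=0$, and evaluating the surviving bulk term in the diagonal case as the regularized Gamma integral $\frac{1}{n!}\gen\int_0^\infty\e^{-z}z^{n+\alpha}\,\d z$ --- is exactly the paper's proof, and the cross-check via Thm \ref{prop:anomalous_tricomi} and \eqref{triccomi} is the paper's second derivation. So the approach is right; the issue is with the boundary-term analysis, which you correctly flag as the main obstacle but then describe in a way that would not give the stated answers.

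Two concrete gaps. First, in the off-diagonal case $m>n$ it is not true that ``a single genuinely contributing index pins down the factor $\tfrac{1}{m-n}$.'' All $n+1$ boundary terms survive (one for each integration by parts with $j=0,\dots,n$; beyond that $\partial^j L_n^\alpha\equiv 0$, so integrating by parts $m$ rather than $n+1$ times adds nothing), each contributing $\frac{(-1)^{n+\alpha}}{m!(|\alpha|-n-1)!}\cdot\frac{(m-k)!}{(n+1-k)!}$; what is true is that \emph{within} each boundary term only one pair of series indices contributes to the regular value. The factor $\tfrac{1}{m-n}$ only appears after summing all of them via the telescoping identity $\sum_{k=1}^{n+1}\frac{(m-k)!}{(n+1-k)!}=\frac{m!}{n!\,(m-n)}$ (Lemma \ref{telescope}); this summation is the heart of the computation and is absent from your sketch. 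Second, in the diagonal case $m=n$ you say the boundary terms ``combine with the bulk $\psi$-term,'' but in fact one must show they cancel identically: here each regular value receives \emph{two} contributions (from the index pairs $(j,i)=(|\alpha|-k,0)$ and $(j,i)=(|\alpha|-n-1,\,n+1-k)$), which are equal and opposite, so the total boundary contribution is $0$ and the answer is purely $\frac{1}{n!}\gen\int_0^\infty\e^{-z}z^{n+\alpha}\,\d z=\frac{(-1)^{\alpha+n+1}}{n!\,(|\alpha|-n-1)!}\psi(|\alpha|-n)$. Without these two pieces of bookkeeping the right-hand sides of \eqref{eq:gen_Lint} are not established.
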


\begin{proof}
 Let us first consider the case $m>n$ and $|\alpha|>n$. We find 
 \begin{align}
  \gen\int_0^\infty L_m^\alpha(z) L_n^\alpha(z) z^\alpha \e^{-z} \dd z
  &= \frac{1}{m!} 
  \gen\int_0^\infty 
  \big(\partial^m \e^{-z} z^{m+\alpha}\big) L_n^\alpha(z) \dd z.
 \end{align}
We may integrate the right-hand side by parts $n+1$ times, each time 
collecting the regular value of the remaining integrand 
(cf. Appendix \ref{app:gen_int} for details). Thus 
\begin{align}
 & 
 \gen\int_0^\infty L_m^\alpha(z) L_n^\alpha(z) z^\alpha \e^{-z} \dd z 
 = \frac{1}{m!} \sum_{k=1}^{n+1} (-1)^k \rv 
 \Big( \big(\partial^{m-k} \e^{-z} z^{m+\alpha}\big) \partial^{k-1} L_n^{\alpha}(z) \Big).
\end{align}
Inserting the series expansions 
\begin{align} 
 \partial_z^{m-k} \e^{-z} z^{m+\alpha} &
 = (-1)^{m-k} \Bigg(\sum_{j=0}^{|\alpha|-m-1} +
  \sum_{j=|\alpha|-k}^\infty\Bigg) \frac{(-1)^j}{j!}
  (|\alpha|-m-j)_{m-k} z^{j+k-|\alpha|},\\ 
\partial_z^{k-1}  L_n^\alpha(z) 
&=  (-1)^n\sum_{i=0}^{n+1-k}\frac{(-\alpha-n)_{n-i-k+1}
                                       z^{i}}{i! (n-i-k+1)!},
\end{align}
we obtain a triple sum:  
\begin{align}\label{pukaw}
 &\quad\gen\int_0^\infty L_m^{\alpha}(z)  L_n^{\alpha}(z) z^{\alpha} \e^{-z} 
 \dd z
 \\ \notag 
 &= \frac{(-1)^{m+n}}{ m!}
  \sum_{k=1}^{n+1}  \rv 
 \Bigg( \Bigg(\sum_{j=0}^{|\alpha|-m-1} + \sum_{j=|\alpha|-k}^\infty\Bigg) \sum_{i=0}^{n+1-k}
 \frac{(-1)^j}{j!} (|\alpha|-m-j)_{m-k}  
 \\ \notag &\quad\times
\frac{(|\alpha|-n)_{n-k-i+1}}{i!(n-i-k+1)!} z^{i+k-|\alpha|+j}  \Bigg). 
\end{align}
The regular value evaluated at $0$ is then the coefficient for 
$j=|\alpha|-k-i$. One quickly finds that only the term $i=0$ is non-zero, 
so actually, the regular value is the coefficient corresponding to 
$j=|\alpha|-k$ and $i=0$. Hence we are left with the single sum
\begin{align}\eqref{pukaw}
&= \frac{(-1)^{m+n+|\alpha|-k}}{ m!}
  \sum_{k=1}^{n+1}   \frac{ (k-m)_{m-k}(|\alpha|-n)_{n-k+1} }{(|\alpha|-k)!(n+1-k)!} \\ \notag 
  &= \frac{(-1)^{n+\alpha}}{ m!(|\alpha|-n-1)!}
  \sum_{k=1}^{n+1}   \frac{ (m-k)! }{(n+1-k)!} \\ \notag 
&= \frac{(-1)^{n+\alpha}}{ n!(|\alpha|-n-1)! (m-n)},
\end{align}
where in the last step we used an  identity
about telescoping series recalled in Lemma \ref{telescope}.

Now consider $m=n<|\alpha|.$ We again use the Rodrigues type formula. 
However, we can now only integrate by parts $n$ times, so that a 
simple generalized integral survives: 
\begin{align}
 &\quad 
 \gen\int_0^\infty \big(L_n^{\alpha}(z)\big)^2 z^{\alpha} \e^{-z} \dd z
 \\ \notag 
 &= \frac{1}{n!} \Bigg( \sum_{k=1}^{n} (-1)^k \rv 
 \Big( \big(\partial^{n-k} \e^{-z} z^{n+\alpha}\big) \partial^{k-1} L_n^{\alpha}(z) \Big)
 + \gen\int_0^\infty \e^{-z} z^{n+\alpha} \dd z\Bigg).
\end{align}
The remaining generalized integral is the regularized Gamma function, 
which is treated as an example in \cite{DGR23a}:
\begin{align}
 \gen\int_0^\infty \e^{-z} z^{n+\alpha} \dd z
 = -\frac{(-1)^{\alpha+n}}{(|\alpha|-n-1)!} \psi(|\alpha|-n), 
 \quad n<|\alpha|\in\nn.
\end{align} 
The sum of regular values can be written as 
\begin{align} \label{pukaw2}
&\quad \frac{1}{ n!}
  \sum_{k=1}^{n}  \rv 
 \Bigg( \Bigg(\sum_{j=0}^{|\alpha|-n-1} + \sum_{j=|\alpha|-k}^\infty\Bigg) \sum_{i=0}^{n+1-k}
 \frac{(-1)^j}{j!}  (|\alpha|-n-j)_{n-k}
 \\ \notag &\quad\times 
\frac{(|\alpha|-n)_{1+n-k-i}}{i!(n-i-k+1)!} z^{i+k-|\alpha|+j}  \Bigg).
\end{align} 
In contrast to the case $m>n$, now there are two terms that contribute 
to $\rv$: $(j=|\alpha|-k \land i=0)$ and $(j=|\alpha|-n-1\land
i=n+1-k)$.
We obtain 
\begin{align}\eqref{pukaw2}=
  &\frac{(-1)^{n+\alpha}}{ n!(|\alpha|-n-1)!}
  \sum_{k=1}^n\Bigg(\frac{ (n-k)! }{(n+1-k)!}-   \frac{ (n-k)! }{(n+1-k)!} \Bigg)\,=\,0
\end{align} 
 We thus 
 proved \eqref{eq:gen_Lint}. 
\end{proof}

We may write down a ``generalized Gram matrix'' $G(\alpha)$ 
whose entries are the bilinear generalized integrals over Laguerre 
polynomials. We have 
\begin{align}
 G(\alpha) &= \diag\Big( \frac{\Gamma(1+\alpha+n)}{n!} \Big), 
 &&\quad \alpha\in\cc\setminus-\nn, 
 \end{align}
 and, for $\alpha\in-\nn$:
{\small
 \begin{align*}
 G(\alpha) &= 
     \bordermatrix{  & 0 & 1  & \cdots & |\alpha|-1 & |\alpha| \;\cdots\cr
       0  &- \frac{(-1)^{|\alpha|} \psi(|\alpha|)}{(|\alpha|-1)! 0! }  
        & \frac{(-1)^{|\alpha|}}{(|\alpha|-1)!  } 
        & \cdots
        & \frac{(-1)^{|\alpha|}}{(|\alpha|-1)(|\alpha|-1)!  }  & \cdots\cr
       1 & \frac{(-1)^{|\alpha|}}{(|\alpha|-1)!  } 
        & \frac{(-1)^{|\alpha|}\psi(|\alpha|-1)}{(|\alpha|-2)!  1!} 
        & \cdots 
        & \frac{(-1)^{1+|\alpha|}}{(|\alpha|-2)(|\alpha|-2)! } & \cdots \cr
      \vdots & \vdots
        & \vdots  
        & \ddots 
        & & \cr
      |\alpha|-1 & \frac{(-1)^{|\alpha|}}{(|\alpha|-1)(|\alpha|-1)! }  
      &  \frac{(-1)^{1+|\alpha|}}{(|\alpha|-2)(|\alpha|-2)! } 
      & & \frac{\psi(1)}{0!(|\alpha|-1)!}  & \cdots \cr  
       |\alpha|\cdots & \vdots & \vdots & 
       & \vdots  & \diag\Big( \frac{(n-|\alpha|)!}{n!} \Big) }.
\end{align*}}
In particular, all entries of the rows and columns $0,\dots,|\alpha|-1$ 
are non-zero and the sign of the entries of these rows and columns 
oscillates.

  Integration by parts seems to be the simplest way to determine
  bilinear generalized integrals of Laguerre polynomials.
  Alternatively, we can derive Thms \ref{thm3.2}, \ref{thm3.3}
  and
 \ref{prop:anomalous_laguerre}
from Thm \ref{prop:anomalous_tricomi}, using the fact that Laguerre
polynomials are essentially special cases of Tricomi functions. For
Thms
\ref{thm3.2} and \ref{thm3.3}
this is easy. To derive Thm  \ref{prop:anomalous_laguerre}
we need to analyze certain seemingly
singular expressions.

\begin{proof}[Alternative proof of Thm. \ref{prop:anomalous_laguerre}] 
 Let $m,n\in\nn_0$ and $\alpha\in\zz$. Let us show that 
 \begin{subequations}
 \begin{align} \label{eq:limit_diff.}
   &\gen\int_0^\infty L_n^\alpha(z)L_m^\alpha(z) \e^{-z}z^\alpha\d z\\\label{eq:limit_diff+} 
   = & \frac{(-1)^{m+n}}{m!n!} 
 \gen \int_0^\infty U_{-1-\alpha-2n,\alpha}(z) 
    U_{-1-\alpha-2m,\alpha}(z) \e^{-z}z^\alpha\d z,
    \quad m\neq n,
    \end{align}
    \end{subequations}
   and  
    \begin{subequations}
    \begin{align}
\label{eq:limit_same.}
&   \gen\int_0^\infty L_n^\alpha(z)^2\e^{-z}z^\alpha\d z\\&= \frac{1}{(n!)^2} 
 \gen \int_0^\infty U_{-1-\alpha-2n,\alpha}(z)^2\e^{-z}z^\alpha\d z. \label{eq:limit_same+}
   \end{align}
     \end{subequations} 

   The generalized integrals of Tricomi functions given by  Thm \ref{prop:anomalous_tricomi} are seemingly singular for the 
   parameters in \eqref{eq:limit_diff+} and \eqref{eq:limit_same+}.
   However, these singularities are merely apparent. To see this, 
   we replace  \eqref{eq:limit_diff+} and \eqref{eq:limit_same+} 
   by the following limits: 
 \begin{align} \label{eq:limit_diff}
 I_1 &\coloneqq \frac{(-1)^{m+n}}{m!n!} 
  \lim_{\epsilon\to0} \gen \int_0^\infty U_{-1-\alpha-2n-2\epsilon,\alpha}(z) 
    U_{-1-\alpha-2m-2\epsilon,\alpha}(z)\e^{-z}z^\alpha\d z ,
    \quad m\neq n,\\
\label{eq:limit_same}
I_2 &\coloneqq     \frac{1}{(n!)^2} 
  \lim_{\epsilon\to0} \gen   \int_0^\infty 
  U_{-1-\alpha-2n-2\epsilon,\alpha}(z)^2 
   \e^{-z}z^\alpha\d z .
 \end{align}
 
 To determine $I_1$, we may without loss of generality 
assume that $m>n$. Note that 
 \begin{align}
  &\quad \Bigg| \sum_{k=0}^{|\alpha|-1}
\big(\tfrac{1+\theta_1-|\alpha|}{2}\big)_k
\big(\tfrac{3+\theta_2-|\alpha|}{2}+k\big)_{|\alpha|-1-k}
\notag               \\&\times                                          
\Big(-\psi(|\alpha|-k)-\psi(k+1)
+\tfrac12H_k\big(\tfrac{1+\theta_1-|\alpha|}{2}\big)
- \tfrac12H_{|\alpha|-1-k}\big(\tfrac{3+\theta_2-|\alpha|}{2}+k\big)\Big)\Bigg|<\infty
 \end{align}
 for any values of the parameters because possible vanishing denominators 
 of the harmonic numbers are balanced by the Pochhammer symbol. Due to 
 the $\Gamma$ functions in the prefactor, see 
 Thm. \ref{prop:anomalous_tricomi}, this sum only contributes if 
 $-\alpha>m$ \emph{and} $-\alpha>n$. In the cases where the sum does 
 not contribute, we have 
 \begin{align}
  I_1 &=\frac{(-1)^{\alpha+m+n}}{2(m-n) n! m!}  
  \\ \notag 
  &\quad\times\lim_{\epsilon\to0}\begin{cases}
      \dfrac{\psi(-n-\epsilon)
    +\psi(-\alpha-n-\epsilon)
    }{\Gamma(-\alpha-n-\epsilon)
    \Gamma(-m-\epsilon)}
      - (m \leftrightarrow n),   &\quad \alpha\geq0;  \vspace{12pt} \\
     \dfrac{\psi(-n-\epsilon)
    +\psi(|\alpha|-n-\epsilon)
    }{\Gamma(-n-\epsilon)
    \Gamma(|\alpha|-m-\epsilon)}
      - (m \leftrightarrow n),    &\quad \alpha<0,\;m\geq|\alpha|.
        \end{cases}
 \end{align}
Now we use 
\begin{align}
 \frac{\psi(z)}{\Gamma(z)}\Big|_{z=-k} = (-1)^{k+1} k!
\end{align}
to obtain
\begin{align}
  I_1 &=
\begin{cases}
      0,   &\quad \alpha\geq0;  \vspace{12pt} \\
     \dfrac{(-1)^{\alpha+n}}{(m-n) n! \Gamma(|\alpha|-n)  }  
     ,    &\quad \alpha<0,\;m\geq|\alpha|.
        \end{cases}
\end{align}
In particular, if $\alpha<0$ and both $m,n\geq|\alpha|$, then $I_1=0$.

Let us now assume that $-\alpha>m$ and $-\alpha>n$ (and still, $m>n$). 
Then 
\begin{align}
\label{eq:I1_mn}
 I_1&= \frac{(-1)^{\alpha+m+n}}{2 n! m! \Gamma(|\alpha|-m) 
  \Gamma(|\alpha|-n)} 
  \Bigg( \frac{(-1)^m m! \Gamma(|\alpha|-m)-(-1)^n n!\Gamma(|\alpha|-n)}{m-n}
        \\ \notag 
    &\quad+
    \sum_{k=0}^{|\alpha|-1} (-n)_k (1-m+k)_{|\alpha|-1-k}
    \Big(  H_k(-n)- H_{|\alpha|-1-k}(1-m+k)\Big) \Bigg),
\end{align}
where 
\begin{align}
 (z)_k H_k(z) = \sum_{j=0}^{k-1} \prod_{l=0,l\neq j}^{k-1} (z+l).
\end{align}
With similar manipulations as in the proof of Theorem \ref{prop:anomalous_laguerre} and using Lemma \ref{telescope}, the sum 
in the last line  of \eqref{eq:I1_mn} can be simplified and we obtain 
the expression from Thm \ref{prop:anomalous_laguerre}.

The analysis of \eqref{eq:limit_same} is similar. We distinguish 
the three cases $\alpha>0$, $(\alpha<0 \land n\geq |\alpha|)$ and 
$-\alpha>n$. The sum in \eqref{eq:gen_Uint_2} only contributes if 
$-\alpha>n$. To obtain the limit, one needs to evaluate certain combinations 
of $\psi'(z)$, $\psi(z)$ and $\Gamma(z)$ at negative integers. We derive 
the respective formulas in Lemma \ref{lem:lem2}. 
\end{proof}

\appendix
\section{Generalized integral}
\label{app:gen_int}
\subsection{Definition}
Let us recall from \cite{DGR23a} the definition of the generalized 
integral where the integrand has a non-integrable 
homogeneous singularity at 0. 
\begin{definition}{Definition}
\label{def:one_sided_genInt}
We say that a function $f$ on $]0,\infty[$ is {\em integrable in 
the generalized sense} if it is integrable on $]1,\infty[$ and there 
exists a finite set $\Omega\subset\cc$ 
and complex coefficients $(f_k)_{k \in \Omega}$ such that 
\begin{align} 
f-\sum_{k\in\Omega} f_k r^k
\end{align} 
is integrable on $]0,1[$.  The set of such functions is denoted
$\cF$. For $f\in\cF$ we define
\begin{align}
\label{gener}
 \gen\int_0^\infty f(r)\dd r
 &\coloneqq \sum_{k\in\Omega\backslash\{-1\}}\frac{f_k}{k+1}+
\int_0^{1}\Big(f(r)-\sum_{k\in\Omega}f_k r^k\Big)\dd r+\int_{1}^\infty
f(r)\dd r.
\end{align}
\end{definition}
Note that the set  $\{k \in \Omega\, | \, \Re(k)\leq -1\}$ and the 
corresponding $f_k$ are uniquely determined by $f$. 
It is convenient to allow $k$ with $\Re(k)>-1$. The generalized 
integral of $f$ does not depend on the choice of $\Omega$.

It is clear that the generalized integral extends the standard integral: 
\begin{align}
\gen\int_0^\infty f(r)\dd r=\int_0^\infty f(r)\dd  r \quad\text{for }
  f \in L^1]0,\infty[  .
\end{align}

The generalized integral is in general coordinate dependent. 
As was proved in \cite{DGR23a}, the generalized integral
is invariant under scaling if and only if $f_{-1} = 0$, and invariant
under a large class of a change of variables if $f_k=0$ for every negative integer $k$. 

\begin{definition}{Definition}
The generalized integral \eqref{gener} is called {\em anomalous} 
if there exists $n \in \nn$ such that $f_{-n}\neq0$.
\end{definition}

\subsection{Integration by parts}

\begin{definition}{Definition}{Definition} We say that $F\in\cG$ if $F$ is a measurable function on
$[0,\infty[$, bounded on $[1,\infty[$ and there exists a finite set
$\Theta\subset\cc$ and complex coefficients $(F_k)_{k\in\Theta}$ and
$c\in\cc$, such that
\begin{align} F(r)-\sum_{k\in\Theta}F_kr^k-c\ln r\end{align}
has a limit as $r\to0$. For $F\in\cG$ we define the regular value of
$F$ at $0$:
\begin{align} \label{kaka}
\rv
F\coloneqq \lim_{r\searrow0}\Big(F(r)-\sum_{k\in\Theta\backslash\{0\}}F_kr^k-c\ln
r\Big).
\end{align}
\end{definition}

Note that given $F$ the coefficients $F_k$ for $k\in\Theta$ such 
that $\Re(k)\leq0,$ $k\neq0$, as well as $c$, are uniquely defined.  
Hence \eqref{kaka} depends only on $F$.

\begin{proposition}{Proposition} Let $f\in\cF$  and $r>0$. Then 
\begin{align} F(r)\coloneqq -\int_r^\infty f(y)\d y\end{align} 
belongs to $\cG$ and 
\begin{align}
  \gen\int_0^\infty f(y) \dd y 
  = -\rv F.\label{popi5}
 \end{align}
\end{proposition}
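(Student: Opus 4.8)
The plan is to verify directly that $F(r) = -\int_r^\infty f(y)\,\dd y$ lies in $\cG$ by exhibiting its singular expansion at $0$, and then to match the regular value against the definition \eqref{gener} of the generalized integral. First I would fix a finite set $\Omega\subset\cc$ and coefficients $(f_k)_{k\in\Omega}$ as in Definition \ref{def:one_sided_genInt}, so that $g(r) \coloneqq f(r) - \sum_{k\in\Omega} f_k r^k$ is integrable on $]0,1[$; enlarging $\Omega$ if necessary, I may assume $-1 \in \Omega$ (allowing $f_{-1}=0$). Since $f$ is integrable on $]1,\infty[$, the antiderivative $F$ is well-defined and bounded on $[1,\infty[$. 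For $0 < r < 1$ I would split
\begin{align*}
 F(r) = -\int_r^1 f(y)\,\dd y - \int_1^\infty f(y)\,\dd y
 = -\int_r^1 g(y)\,\dd y - \sum_{k\in\Omega}\int_r^1 f_k y^k\,\dd y - \int_1^\infty f(y)\,\dd y.
\end{align*}

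\textbf{Singular expansion.} The key step is to analyze the three pieces as $r\searrow 0$. The term $\int_r^1 g(y)\,\dd y$ converges to $\int_0^1 g(y)\,\dd y$ since $g\in L^1]0,1[$, contributing a finite limit. For each $k\in\Omega$ with $k\neq -1$ we have $\int_r^1 f_k y^k\,\dd y = \frac{f_k}{k+1}(1 - r^{k+1})$, which produces the singular monomial $-\frac{f_k}{k+1}\, r^{k+1}\cdot(-1) = \frac{f_k}{k+1} r^{k+1}$ (for $\Re(k)<-1$ this is genuinely singular, for $\Re(k)>-1$ it is absorbed into the limit) plus the constant $-\frac{f_k}{k+1}$. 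For $k=-1$ we get $\int_r^1 f_{-1} y^{-1}\,\dd y = -f_{-1}\ln r$, i.e. a logarithmic term with coefficient $c = f_{-1}$. Collecting: $F(r) = \sum_{k\in\Omega, k\neq-1} \frac{f_k}{k+1} r^{k+1} + f_{-1}\ln r + \big(\text{terms with a limit}\big)$, which shows $F\in\cG$ with $\Theta = \{k+1 : k\in\Omega\setminus\{-1\}\}$ and $c = f_{-1}$; note the shift $k\mapsto k+1$ sends $\Re(k)\le -1$ to $\Re(k+1)\le 0$, consistent with the uniqueness clause in the definition of $\cG$.

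\textbf{Matching the regular value.} Finally I would compute $\rv F$ by discarding the $r^{k+1}$ monomials (all of them, including $k+1=0$ which corresponds to $k=-1$ — but that term is the $\ln$ term, handled separately) and the $c\ln r$ term, and taking the limit of what remains. The surviving constant is $\int_0^1 g(y)\,\dd y - \sum_{k\in\Omega,\, k\neq -1}\frac{f_k}{k+1} - \int_1^\infty f(y)\,\dd y$. Hence
\begin{align*}
 -\rv F = -\int_0^1\!\Big(f(y)-\sum_{k\in\Omega} f_k y^k\Big)\dd y + \sum_{k\in\Omega\setminus\{-1\}}\frac{f_k}{k+1} + \int_1^\infty f(y)\,\dd y,
\end{align*}
which is exactly the right-hand side of \eqref{gener}, up to a sign discrepancy I must track carefully. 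This bookkeeping of signs and of the constant $-\frac{f_k}{k+1}$ coming from the lower limit $y=1$ is the main obstacle: it is routine but error-prone, and one must be careful that the monomial $r^0$ (constant term) is \emph{not} stripped in $\rv$ even though $r^{k+1}$ with $k+1 = 0$ might appear to be — this is why the case $k=-1$ is treated via the logarithm and $r^0$ never actually shows up among the $F_k$ monomials unless $-1\notin\Omega$, in which case $f_{-1}=0$ and the constant is part of the limit anyway. Once the signs are reconciled, \eqref{popi5} follows.
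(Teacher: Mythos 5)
Your proposal is correct and follows essentially the same route as the paper: split $F$ at $r=1$, integrate the monomials $f_k y^k$ explicitly to extract the singular terms $\frac{f_k}{k+1}r^{k+1}$ and $f_{-1}\ln r$, and identify the surviving constant with the right-hand side of \eqref{gener}. The only blemish is the self-flagged sign slip in the "surviving constant" (it should read $-\int_0^1 g -\sum_{k\neq-1}\frac{f_k}{k+1}-\int_1^\infty f$, so that $-\rv F$ comes out with $+\int_0^1 g$), which reconciles exactly as you anticipate.
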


 \begin{proof}
 It is clear that $F$ is bounded on $[1,\infty[$ because 
 $f\in\cF$.
 Let $0<r<1$. Then
 \begin{align}
   \int_r^\infty f(y)\d y
&=  
\int_r^{1}\Big(f(y)-\sum_{k\in\Omega}f_k y^k\Big)\dd y+\int_{1}^\infty 
                            f(y)\dd y\\ \notag
&+
                                          \sum_{k\in\Omega\backslash\{-1\}}\frac{f_k}{k+1}(1-r^{k+1})-f_{-1}\ln r\\
   =&
\gen\int_0^\infty   f(y)\dd y-\int_0^r\Big(f(y)-\sum_{k\in\Omega}f_k y^k\Big)\dd y\label{puka1}                          \\ \notag
&-
                                          \sum_{k\in\Omega\backslash\{-1\}}\frac{f_k}{k+1}r^{k+1}-f_{-1}\ln r.
 \end{align}
The second term in \eqref{puka1} goes to zero as
$r\searrow0$. After pulling the last two terms to the left-hand side, 
we may thus perform the limit $r\searrow0$. We obtain  
\eqref{popi5}.
\end{proof}
\begin{corollary}
 Let $f$, $g$ be measurable functions and suppose that $fg',gf'\in\cF$, 
 and that $\lim_{r\to\infty}f(r)g(r)=0$. Then
  \begin{align}
  \gen\int_0^\infty f(r)g'(r)\dd r=
  -  \gen\int_0^\infty f'(r)g(r)\dd r-\rv(fg).\end{align}
  \end{corollary}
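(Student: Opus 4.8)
The plan is to derive the integration-by-parts formula for generalized integrals directly from the preceding Proposition, which identifies $\gen\int_0^\infty f = -\rv F$ where $F(r) = -\int_r^\infty f(y)\,\d y$. The key observation is that for genuinely measurable $f,g$ with $fg', gf' \in \cF$, the ordinary product rule $(fg)' = f'g + fg'$ holds pointwise almost everywhere, so $(fg)' \in \cF$ as well, and we may apply the Proposition to the function $h \coloneqq fg'$ and separately to $f'g$.

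First I would set $G(r) \coloneqq -\int_r^\infty f(y)g'(y)\,\d y$ and $\tilde G(r) \coloneqq -\int_r^\infty f'(y)g(y)\,\d y$, which by the Proposition lie in $\cG$ and satisfy $\gen\int_0^\infty fg' = -\rv G$ and $\gen\int_0^\infty f'g = -\rv \tilde G$. Next, since $\lim_{r\to\infty} f(r)g(r) = 0$ by hypothesis, the fundamental theorem of calculus gives $f(r)g(r) = -\int_r^\infty (fg)'(y)\,\d y = -\int_r^\infty \big(f'(y)g(y) + f(y)g'(y)\big)\,\d y = \tilde G(r) + G(r)$ for every $r > 0$. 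Thus $fg = G + \tilde G$ as functions on $]0,\infty[$, and in particular $fg \in \cG$ since $\cG$ is closed under addition.

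The final step is to take the regular value of both sides of $fg = G + \tilde G$ at $0$. Because $\rv$ is linear on $\cG$ — it simply extracts the constant term after subtracting off the negative-power and logarithmic singular parts, and those subtractions are additive — we get $\rv(fg) = \rv G + \rv \tilde G = -\gen\int_0^\infty fg' - \gen\int_0^\infty f'g$. Rearranging yields exactly
\begin{align}
\gen\int_0^\infty f(r)g'(r)\,\d r = -\gen\int_0^\infty f'(r)g(r)\,\d r - \rv(fg),
\end{align}
as claimed.

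The main obstacle is the bookkeeping needed to justify that $\cG$ is closed under addition and that $\rv$ is additive: one must check that when $F_1, F_2 \in \cG$ with singular data $(\Theta_1, (F_k^{(1)}), c_1)$ and $(\Theta_2, (F_k^{(2)}), c_2)$, the sum has singular data indexed by $\Theta_1 \cup \Theta_2$ with coefficients added termwise and logarithmic coefficient $c_1 + c_2$, so that the constant terms (the regular values) simply add. This is routine given the uniqueness remark following the definition of $\cG$ — the coefficients for $\Re(k) \le 0$, $k \neq 0$, and $c$ are uniquely determined — but it should be stated explicitly, perhaps as a one-line remark, since the corollary's proof rests on it. Everything else follows formally from the Proposition.
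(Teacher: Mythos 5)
Your proof is correct and follows essentially the same route as the paper: both rest on the Proposition, the product rule $(fg)'=f'g+fg'$, and the vanishing of $fg$ at infinity. The only (immaterial) difference is that the paper applies the Proposition once to $F\coloneqq fg$ and then splits $\gen\int_0^\infty (fg)'$ using linearity of the generalized integral on $\cF$, whereas you apply it separately to $fg'$ and $f'g$ and combine via additivity of $\rv$ on $\cG$ --- dual uses of the same routine linearity fact.
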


  \proof
Clearly, $(fg)'=fg'+f'g$, hence $(fg)'\in\cF$. Moreover
  \begin{align} f(r)g(r)=-\int_r^\infty (fg)'(y)\dd y
.\end{align} Therefore, it is enough to apply \eqref{popi5} to $F\coloneqq fg$. \qed

\subsection{Dimensional regularization}
\label{ssc:dim_reg}
We briefly recall from \cite{DGR23a} how the generalized 
integral can be computed using dimensional regularization. 

Let  $N\in\nn$ and let $f : \, ] 0 , \infty [ \times \{ \alpha \in \cc \, | \, \Re(\alpha) > -N-1 \} \to \cc$ be a function such that $f(r , \cdot)$ is holomorphic for each $r$, $\| f(\cdot, \alpha) \|_{L^1[1, \infty[}$ is bounded locally uniformly in $\alpha$, and there exist holomorphic functions $f_0, \dots, f_N$ of $\alpha$ such that the $L^1]0,1]$ norm of $f(r,\alpha)-\sum_{n=0}^N r^{\alpha+n}f_n(\alpha)$ is bounded locally uniformly in $\alpha$. Then $f(\cdot, \alpha)$ is integrable in the generalized sense, and for $- \alpha \not \in \{ 1 , \dots, N \}$ one has
\begin{align} 
\label{anla}
&\quad\gen\int_0^\infty f(r,\alpha)\dd r
\\ \notag &=   \sum_{n=0}^N \frac{f_{n}(\alpha)}{\alpha+n+1}
+ \int_0^1 \Big( f(r,\alpha) - \sum_{n=0}^N r^{\alpha+n} f_n(\alpha) \Big) \dd r
+ \int_1^\infty f(r,\alpha) \dd r.
\end{align}
By Morera's theorem, the right hand side is, away from the poles at $-1, \dots, -N$, a holomorphic function of $\alpha$. Therefore, to obtain \eqref{anla} in the non-anomalous case it is enough to compute \eqref{anla} in
the region where the usual integral is convergent and continue analytically. 

Let $m \in \{ 1 , \dots, N \}$. The right hand side of \eqref{anla} has 
a simple pole at $\alpha = -m$ with residue $f_{m-1}(-m)$ (possibly zero). 
Its finite part is 
\begin{equation}
\operatornamewithlimits{fp}_{\alpha \to -m} \gen \int_0^\infty f(r,\alpha)\dd r
  =\lim_{\alpha\to-m}\Bigg(\gen\int_0^\infty f(r,\alpha)\dd
r-\frac{f_{m-1}(-m)}{\alpha+m}\Bigg).  
\end{equation}
To recover the generalized integral at $\alpha=-m$, 
one also needs to subract a finite term: 
\begin{align}
 \gen\int_0^\infty f(r,-m)\dd r = \operatornamewithlimits{fp}_{\alpha \to -m} \gen \int f(r,\alpha)\dd r -f'_{m-1}(-m). 
 \label{eq:dimreg_anomalous}
\end{align}

\section{Two useful lemmas}
\label{app:lemmas}
We prove several identities that have been used in the computation 
of generalized integrals:

 \begin{lemma}{Lemma}
 \label{telescope}
We have 
 \begin{align} \label{telescope1}
 \sum_{k=0}^n\frac{(a)_k}{k!}=\frac{(a+1)_n}{n!},
\end{align}
which implies for $m,n\in\nn$, $m>n$:
\begin{align}
 \sum_{k=1}^{n+1} \frac{(m-k)!}{(n+1-k)!} = \frac{m!}{n!} \frac{1}{m-n}.
\end{align}
\end{lemma}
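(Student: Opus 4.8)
The plan is to prove the Pochhammer identity \eqref{telescope1} first, as it is the key combinatorial fact, and then deduce the second identity as an easy corollary by reindexing.

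For \eqref{telescope1}, the cleanest approach is a direct telescoping argument. Observe that
\begin{align}
\frac{(a+1)_k}{k!}-\frac{(a+1)_{k-1}}{(k-1)!}
=\frac{(a+1)_{k-1}}{k!}\big((a+k)-k\big)
=a\,\frac{(a+1)_{k-1}}{k!}
=\frac{(a)_k}{k!},
\end{align}
where in the last step I used $(a)_k=a(a+1)_{k-1}$. Summing this telescoping identity over $k=1,\dots,n$ collapses the left-hand side to $\tfrac{(a+1)_n}{n!}-\tfrac{(a+1)_0}{0!}=\tfrac{(a+1)_n}{n!}-1$, while the right-hand side is $\sum_{k=1}^n\tfrac{(a)_k}{k!}=\sum_{k=0}^n\tfrac{(a)_k}{k!}-1$. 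Cancelling the $-1$ on both sides yields \eqref{telescope1}. (An induction on $n$ using the same one-line recurrence $\tfrac{(a+1)_{n}}{n!}=\tfrac{(a+1)_{n-1}}{(n-1)!}+\tfrac{(a)_n}{n!}$ works equally well and is perhaps more in the spirit of ``can be proven inductively''.)

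For the second identity, I would specialize \eqref{telescope1} to a value of $a$ that produces the falling-factorial ratios. Writing $j=n+1-k$ so that $k$ running from $1$ to $n+1$ corresponds to $j$ running from $n$ down to $0$, the summand becomes $\tfrac{(m-n-1+j)!}{j!}$. Since $\tfrac{(m-n-1+j)!}{j!\,(m-n-1)!}=\binom{m-n-1+j}{j}=\tfrac{(m-n)_j}{j!}$, we get
\begin{align}
\sum_{k=1}^{n+1}\frac{(m-k)!}{(n+1-k)!}
=(m-n-1)!\sum_{j=0}^{n}\frac{(m-n)_j}{j!}.
\end{align}
Applying \eqref{telescope1} with $a=m-n$ gives $\sum_{j=0}^n\tfrac{(m-n)_j}{j!}=\tfrac{(m-n+1)_n}{n!}$. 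Finally $(m-n-1)!\,(m-n+1)_n=(m-n-1)!\cdot\tfrac{m!}{(m-n)!}=\tfrac{m!}{(m-n)(n)!}\cdot\tfrac{n!}{\,1\,}$; more precisely $(m-n-1)!\,(m-n+1)_n=\tfrac{(m-n-1)!\,m!}{(m-n)!}=\tfrac{m!}{m-n}$, so dividing by the $n!$ already present yields $\tfrac{m!}{n!}\cdot\tfrac{1}{m-n}$, as claimed.

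I do not anticipate a genuine obstacle here; the only place requiring a little care is bookkeeping the factorial-to-Pochhammer conversions and the index shift, and making sure the hypothesis $m>n$ is used exactly where needed (it guarantees $m-n\geq 1$, so $(m-n-1)!$ is a legitimate factorial and the final division by $m-n$ is harmless). If one prefers to avoid the reindexing entirely, an alternative is to prove the second identity by induction on $n$ directly, splitting off the $k=n+1$ term; but routing everything through \eqref{telescope1} keeps the argument to two lines of actual computation.
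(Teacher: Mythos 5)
Your proposal is correct and follows essentially the same route as the paper: a telescoping argument for \eqref{telescope1}, followed by the substitution $a=m-n$ (after the index shift $j=n+1-k$) to obtain the second identity. You have simply written out the details that the paper leaves as a sketch, and all the computations check out.
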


\begin{proof}
The first identity follows from a standard telescoping sum argument. The second identity follows from the first by setting $a=m-n$.
\end{proof}

\begin{lemma}{Lemma}
 \label{lem:lem2} For $n\in\nn_0$, we have 
\begin{align}
 \frac{\psi'(z)}{\Gamma(z)^2}\Big|_{z=-n} =(n!)^2,\quad \text{and}\quad 
   \frac{\psi'(z)-\psi(z)^2}{\Gamma(z)} \Big|_{z=-n}
  &= (-1)^n 2 n! \psi(1+n).
\end{align}
\end{lemma}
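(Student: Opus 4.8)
The plan is to prove the two limit formulas of Lemma~\ref{lem:lem2} by extracting them from the well-known Laurent expansions of $\Gamma$ and $\psi$ at the non-positive integers. Recall that near $z=-n$ one has $\Gamma(z)=\frac{(-1)^n}{n!}\,\frac{1}{z+n}\big(1+c_1(z+n)+O((z+n)^2)\big)$ for a constant $c_1$ (expressible through $\psi(n+1)$), and correspondingly $\psi(z)=\Gamma'(z)/\Gamma(z)$ has a simple pole, $\psi(z)=-\frac{1}{z+n}+\big(\psi(n+1)+\text{something}\big)+O(z+n)$, while $\psi'(z)=\frac{1}{(z+n)^2}+O(1)$. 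The point is that $\Gamma(z)^{-2}$ has a zero of order two at $z=-n$, namely $\Gamma(z)^{-2}=(n!)^2(z+n)^2\big(1+O(z+n)\big)$, which exactly cancels the double pole of $\psi'(z)$. So the first identity follows immediately by multiplying the leading terms: $\psi'(z)/\Gamma(z)^2 = \frac{1}{(z+n)^2}\cdot (n!)^2 (z+n)^2 (1+O(z+n)) \to (n!)^2$.

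For the second identity I would work with the combination $\psi'(z)-\psi(z)^2$, whose pole structure near $z=-n$ is more delicate: both $\psi'(z)$ and $\psi(z)^2$ have a double pole with the \emph{same} leading coefficient $+\frac{1}{(z+n)^2}$, so these cancel and $\psi'(z)-\psi(z)^2$ has at worst a \emph{simple} pole. First I would compute that simple-pole coefficient. Writing $\psi(z)=-u^{-1}+a_0+a_1 u+\cdots$ with $u=z+n$ and $a_0=-\psi(n+1)$ (this is the standard reflection/recursion computation: $\psi(z)=\psi(1-z)-\pi\cot\pi z$ combined with $\psi(1+n-u)$ expanded about $u=0$, or more directly $\psi(z)=\psi(z+n+1)-\sum_{j=0}^n\frac{1}{z+j}$), one gets $\psi(z)^2=u^{-2}-2a_0 u^{-1}+(a_0^2-2a_1)+\cdots$ and $\psi'(z)=u^{-2}-a_1+\cdots$, so $\psi'(z)-\psi(z)^2 = 2a_0 u^{-1} + (\text{finite}) + \cdots$, a genuine simple pole with residue $2a_0=-2\psi(n+1)$. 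Meanwhile $\Gamma(z)^{-1}$ has a \emph{simple zero}: $\Gamma(z)^{-1}=(-1)^n n!\,u\,(1+O(u))$. Multiplying, the simple pole of $\psi'(z)-\psi(z)^2$ meets the simple zero of $\Gamma(z)^{-1}$, leaving the finite limit $2a_0\cdot(-1)^n n! = (-1)^n\,2\,n!\,\psi(1+n)$, which is the claimed value. (Note the sign: $-2\psi(n+1)\cdot(-1)^n n! = (-1)^{n+1}2n!\psi(n+1)$; I must be careful here and match the stated $(-1)^n 2n!\psi(1+n)$ by tracking the sign of $a_0$ correctly — $\psi(z)$ near $z=-n$ behaves as $-\frac{1}{z+n}+\psi(n+1)+\cdots$ so in fact $a_0=+\psi(n+1)$, giving residue $+2\psi(n+1)$ and final value $(-1)^n 2n!\psi(n+1)$, as stated.)

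Concretely the key steps, in order, are: (i) record the Laurent expansion of $\Gamma(z)$ at $z=-n$ through first order, hence of $1/\Gamma(z)$ and $1/\Gamma(z)^2$; (ii) record the Laurent expansion of $\psi(z)$ at $z=-n$ through the constant term, identifying the constant as $\psi(n+1)$ — the cleanest route is the recursion $\psi(z)=\psi(z+n+1)-\sum_{j=0}^{n}(z+j)^{-1}$, which isolates the singular summand $-(z+n)^{-1}$ and gives the regular part at $u=0$ as $\psi(n+1)-\sum_{j=0}^{n-1}\frac{1}{j-n}=\psi(n+1)+H_n$, so $a_0=\psi(n+1)+H_n=\psi(n+1)+\psi(n+1)+\gamma$; I will need to be careful with this constant, but only its contribution to the \emph{residue} of $\psi'-\psi^2$ matters and that residue is $2a_0$; (iii) differentiate to get $\psi'(z)$ near $z=-n$; (iv) form $\psi'/\Gamma^2$ and $(\psi'-\psi^2)/\Gamma$ and take limits. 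The main obstacle is purely bookkeeping: getting the constant term $a_0$ of $\psi(z)$ at $z=-n$ exactly right (including $\gamma$ and $H_n$ pieces) so that the residue $2a_0$ and hence the final product comes out to precisely $(-1)^n 2n!\psi(1+n)$ and not some shifted expression; a useful sanity check is the case $n=0$, where $\psi'(0)/\Gamma(0)^2$ should be $1$ (indeed $\psi'(z)\sim z^{-2}$, $\Gamma(z)\sim z^{-1}$) and $(\psi'(z)-\psi(z)^2)/\Gamma(z)$ should be $2\psi(1)=-2\gamma$ (since near $0$, $\psi(z)=-z^{-1}-\gamma+\cdots$, $\psi(z)^2=z^{-2}+2\gamma z^{-1}+\cdots$, $\psi'(z)=z^{-2}+\cdots$, so $\psi'-\psi^2=-2\gamma z^{-1}+\cdots$ and $1/\Gamma(z)=z+\cdots$, product $\to -2\gamma=2\psi(1)$, confirming the formula with the $(-1)^0$ sign). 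Alternatively, I would note that the whole lemma can be phrased and proved via the ratios $\psi(z)/\Gamma(z)\big|_{z=-k}=(-1)^{k+1}k!$ stated in the main text, by writing $\psi'/\Gamma^2=(\psi/\Gamma)^2\cdot(\psi'/\psi^2)$ and $(\psi'-\psi^2)/\Gamma=(\psi/\Gamma)(\psi'/\psi^2-1)\psi$ and carefully taking the limits of the remaining scalar factors; but the direct Laurent-expansion argument above is the most transparent, so that is the route I would present.
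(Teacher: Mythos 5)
Your argument is correct and reaches both identities, but by a genuinely different route from the paper. You work \emph{locally}: expand $\Gamma$, $\psi$, $\psi'$ in Laurent series at $z=-n$, note that the double pole of $\psi'$ is exactly cancelled by the double zero of $\Gamma(z)^{-2}=(n!)^2(z+n)^2(1+O(z+n))$, giving $(n!)^2$; and that in $\psi'-\psi^2$ the double poles cancel, leaving a simple pole with residue $2a_0=2\psi(n+1)$, which the simple zero of $\Gamma(z)^{-1}=(-1)^n n!\,(z+n)(1+O(z+n))$ converts into the finite value $(-1)^n2n!\,\psi(1+n)$. The paper instead proves exact recursion identities valid for all $z$, namely $\frac{\psi'(z)}{\Gamma(z)^2}=\big((z)_n\big)^2\frac{\psi'(z+n)}{\Gamma(z+n)^2}+\sum_{j=1}^n\frac{((z)_{j-1})^2}{\Gamma(z+j)^2}$ and an analogue for $(\psi'-\psi^2)/\Gamma$, and then lets $z\to-n$ termwise; everything reduces to the behaviour at the origin (your $n=0$ sanity check, $\lim_{\epsilon\to0}(\psi'(\epsilon)-\psi(\epsilon)^2)/\Gamma(\epsilon)=2\psi(1)$, is precisely the paper's base case) together with the values $\psi(-k)/\Gamma(-k)=(-1)^{k+1}k!$ already used in the main text. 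The trade-off: your route is self-contained and transparent but hinges on getting the constant term of $\psi$ at $-n$ right; the paper's route needs no Laurent data beyond the origin but requires producing and verifying the recursions. One bookkeeping slip to repair in your step (ii): from $\psi(z)=\psi(z+n+1)-\sum_{j=0}^n(z+j)^{-1}$, the regular part at $z=-n$ is $\psi(1)+H_n=\psi(n+1)$, not $\psi(n+1)+H_n$ (you evaluated $\psi(z+n+1)$ at $z=-n$ as $\psi(n+1)$ rather than $\psi(1)$). This is consistent with the value $a_0=\psi(n+1)$ that you actually use in the residue computation and with your $n=0$ check, so the final results are unaffected, but the written derivation of $a_0$ should be corrected.
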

\begin{proof}
  Let $z\in\cc\setminus-\nn_0$. It is easy to verify that 
  for $n\in\nn$, we have  
 \begin{align}
  \frac{\psi'(z)}{\Gamma(z)^2} 
  = \big((z)_{n}\big)^2  \frac{\psi'(z+n)}{\Gamma(z+n)^2}
  + \sum_{j=1}^n  \frac{\big((z)_{j-1}\big)^2}{\Gamma(z+j)^2}.
 \end{align}
 The first identity now follows from taking the limit $z\to-n$.
The proof of the second identity works similarly. We first check that 
\begin{align}
 \frac{\psi'(z)-\psi(z)^2}{\Gamma(z)} 
 &= (z)_n \frac{\psi'(z+n)-\psi(z+n)^2}{\Gamma(z+n)}
 + 2 \sum_{j=1}^{n} (z)_{j-1} \frac{\psi(z+j)}{\Gamma(z+j)}.
\end{align}
The claim then follows by taking $z\to-n$, using 
\begin{align} \label{eq:zero123}
  \lim_{\epsilon\to 0} \frac{\psi'(\epsilon)-\psi(\epsilon)^2}{\Gamma(\epsilon)}= 2\psi(1)
\end{align}
and standard identities satisfied by $\psi(z)$.
\end{proof}

\end{document}